\documentclass[12pt]{article}
\usepackage{dsfont}
\usepackage{IEEEtrantools}
\usepackage{mathtools, nccmath}
\usepackage{latexsym}
\usepackage{amsfonts}
\usepackage{amssymb}
\usepackage{graphicx}
\usepackage{amsmath,amsfonts,amsthm}
\usepackage{amsmath}
\usepackage{mathrsfs}
\usepackage{enumerate}
\usepackage{dsfont}
\usepackage{hyperref}
\usepackage{pst-node}
\usepackage{cleveref}
\usepackage{tikz-cd}
\usepackage{nicematrix}

\usepackage{multirow}
\usepackage{tikz}
\usetikzlibrary{arrows,decorations.markings}
\usepackage{xcolor}
\usepackage{bookmark}
\usepackage{hyperref}
\hypersetup{
     colorlinks   = true,
     citecolor    = blue
}

\newcommand\restr[2]{{
  \left.\kern-\nulldelimiterspace 
  #1 
  \littletaller 
  \right|_{#2} 
  }}

\newcommand{\littletaller}{\mathchoice{\vphantom{\big|}}{}{}{}}

\newcommand{\be}{\begin{equation}}
\newcommand{\ee}{\end{equation}}
\newcommand{\bea}{\begin{eqnarray}}
\newcommand{\eea}{\end{eqnarray}}
\newcommand{\bean}{\begin{eqnarray*}}
\newcommand{\eean}{\end{eqnarray*}}
\newcommand{\brray}{\begin{array}}
\newcommand{\erray}{\end{array}}
\newcommand{\biearray}{\begin{IEEEarray}{rCl}}
\newcommand{\eiearray}{\end{IEEEarray}}

\newtheorem{dfn}{Definition}[section]
\newtheorem{thm}[dfn]{Theorem}
\newtheorem{lmma}[dfn]{Lemma}
\newtheorem{ppsn}[dfn]{Proposition}
\newtheorem{crlre}[dfn]{Corollary}
\newtheorem{xmpl}[dfn]{Example}
\newtheorem{rmrk}[dfn]{Remark}

\newcommand{\bdfn}{\begin{dfn}\rm}
\newcommand{\bthm}{\begin{thm}}
\newcommand{\blmma}{\begin{lmma}}
\newcommand{\bppsn}{\begin{ppsn}}
\newcommand{\bcrlre}{\begin{crlre}}
\newcommand{\bxmpl}{\begin{xmpl}}
\newcommand{\brmrk}{\begin{rmrk}\rm}

\newcommand{\edfn}{\end{dfn}}
\newcommand{\ethm}{\end{thm}}
\newcommand{\elmma}{\end{lmma}}
\newcommand{\eppsn}{\end{ppsn}}
\newcommand{\ecrlre}{\end{crlre}}
\newcommand{\exmpl}{\end{xmpl}}
\newcommand{\ermrk}{\end{rmrk}}

\def \qed { \mbox{}\hfill
$\Box$\vspace{1ex}}

\title{Sections and Chapters}

\begin{document}
\author{Keshab Chandra Bakshi\;and\;Silambarasan C}

\title{Unitary normalizers in finite-dimensional inclusions}

\maketitle
\begin{abstract}
    We study regular inclusions of finite-dimensional von Neumann algebras from a matrix-theoretic perspective. To this end, we introduce a new combinatorial invariant of an inclusion, called the \textit{normalizer matrix}, which encodes the structure of the normalizer purely at the level of the inclusion matrix. Using this invariant, we obtain a complete characterization of regular inclusions of finite-dimensional von Neumann algebras. As consequences, we show that every regular inclusion decomposes into finite direct sums and tensor products of basic building blocks, and that regular inclusions are necessarily of depth two. We further investigate the existence of unitary orthonormal bases in the sense of Pimsner–Popa and prove that, under a natural spectral condition, regularity is equivalent to the existence of a unitary orthonormal basis contained in the normalizer. These results provide a unified description of regularity, unitary bases, and depth through the normalizer matrix formalism.
\end{abstract}

\section{Introduction}

Normalizers occupy a central position in the structure theory of
operator algebras, reflecting internal symmetries of inclusions and
governing their associated crossed-product and groupoid constructions.
Their importance is already evident in Dixmier’s seminal classification
of maximal abelian subalgebras (MASAs) into singular, semi-regular, and
regular (Cartan) types, where regularity is defined by generation
of the ambient algebra by the normalizer. This perspective was further
developed by Feldman and Moore \cite{FM}, who showed that Cartan subalgebras give
rise to measured equivalence relations, and later, Renault (\cite{Ren}) discovered a $C^*$-algebraic analogue of this result in the language of groupoids. 

\medskip

In subfactor theory, normalizers play an important role (see, for instance, \cite{JP,C, BG2, BG, PopaVaes, CH}). Regular subfactors form a distinguished class encompassing crossed-product inclusions by finite groups and are closely related to depth-two phenomena and quantum symmetry. 
In particular, finite-index regular subfactors are known to admit unitary Pimsner--Popa bases and to arise from actions of weak Hopf algebras or quantum groupoids (see \cite{BG,CKP}).
Related classification results have also been obtained in \cite{BG5} for finite-index regular inclusions of simple unital $C^*$-algebras, where regularity forces depth two and a crossed-product-type structure.

Motivated by these developments, the present article undertakes a systematic study of {regular inclusions of finite-dimensional von Neumann algebras}.
Despite the simplicity of the finite-dimensional setting, a complete and purely structural characterization of regularity has so far been missing.
Our goal is to provide such a classification and to relate regularity to combinatorial invariants of the inclusion and to the existence of unitary orthonormal bases.

Our approach is matrix-theoretic in nature.
Recall that any inclusion $\mathcal{B} \subset \mathcal{A}$ of finite-dimensional von Neumann algebras is completely determined, up to isomorphism, by its inclusion matrix together with the dimension vectors of the simple summands.
We introduce a new combinatorial invariant of an inclusion, called the normalizer matrix (see \cref{normalizer matrix}), which captures the structure of the normalizer purely at the level of the inclusion matrix.
This invariant isolates the precise combinatorial obstructions to regularity and allows us to characterize regular inclusions in terms of elementary matrix data.

Our first main result provides a complete characterization of regularity in finite dimension.

\medskip
\noindent\textbf{Theorem A:} (See Theorem \ref{mainthm})
\emph{The inclusion of finite dimensional von Neumann algebras, $\mathcal{B} \subset \mathcal{A}$ is regular if and only if}
\begin{enumerate}\itemsep0em
    \item \emph{The inclusion matrix is an \emph{normalizer matrix}, and}
    \item \emph{For each $0 \le i \le s-1$, if $j$ is in the $i$-th row support of the inclusion matrix, then the corresponding summands of $\mathcal{B}$ have equal dimension.}
\end{enumerate}
\emph{As a consequence, any regular inclusion is isomorphic to tensor product and direct sum of inclusions $M_k(\mathbb{C}) \subset M_k(\mathbb{C}),$  $\mathbb{C} \subset \oplus_{i=0}^{v-1} M_{l_i}(\mathbb{C})$ and $\mathbb{C}^t \subset M_t(\mathbb{C})$.}

\medskip

A second motivation for this work comes from the theory of unitary orthonormal (Pimsner--Popa) bases. Such bases play a central role in subfactor theory \cite{Popa} and quantum information theory, where they are closely related to unitary error bases and to protocols such as teleportation and dense coding.
While the existence of unitary orthonormal bases is automatic for several basic inclusions, a complete characterization for general finite-dimensional inclusions has remained open. In a seminal work, Werner
\cite{Wer} established a one-to-one correspondence between tight
teleportation schemes for tripartite systems
$M_n(\mathbb C)\otimes M_n(\mathbb C)\otimes M_n(\mathbb C)$ and
orthonormal bases of unitaries in $M_n(\mathbb C)$. This correspondence
was subsequently extended by Conlon et al.~\cite{CCKL} to inclusions of
the form $N\subset M_n(\mathbb C)$, demonstrating how unitary
orthonormal bases on general multi-matrix algebras can be used to study
quantum teleportation schemes and quantum graphs. In these settings, tight
teleportation schemes are shown to arise from orthonormal unitary
Pimsner–Popa bases, generalizing Werner’s original framework. In all
such applications, it is essential that the unitary orthonormal basis
be contained in the normalizer.

In this direction, we precisely characterize when a regular inclusion admits a unitary orthonormal basis contained in the normalizer. Unlike in the subfactor setting, regularity by itself does not ensure the existence of a unitary orthonormal basis in the normalizer; an additional spectral condition on the inclusion matrix, introduced in \cite{BB}, is required.
Our second main result establishes a sharp equivalence.

\medskip
\noindent\textbf{Theorem B:} (See Theorem \ref{regular-basis}, Corollary \ref{crlreglasgow} and \ref{crlreglasgow1})  
\emph{\label{thm:regularity_normalizer_basis}
Let $(\mathcal{B} \subset \mathcal{A}, E)$ be an inclusion of finite-dimensional von Neumann algebras, where $E$ denotes the unique conditional expectation preserving the trace given in equation \ref{trace}.  
Then the following statements are equivalent:
\begin{itemize}
  \item[(i)]  There exists a unitary orthonormal basis of $\mathcal{A}$ over $\mathcal{B}$ contained in the normalizer
  $\mathcal{N}_{\mathcal{A}}(\mathcal{B})$;
  \item[(ii)] The inclusion $\mathcal{B} \subset \mathcal{A}$ is regular and the inclusion matrix satisfies the spectral condition.
\end{itemize}
Moreover, if either $\mathcal{B}$ or $\mathcal{A}$ is a matrix algebra, then the inclusion is regular if and only if it admits a unitary orthonormal basis contained in $\mathcal{N}_{\mathcal{A}}(\mathcal{B})$.}

\medskip

 Finally, we show that every regular inclusion has depth two, linking our results to (weak) Hopf-algebraic structures. More precisely, if the inclusion $\mathcal{B} \subset \mathcal{A}$ is regular, then the corresponding inclusion matrix has depth $2$ {in the sense of \cite{BKK}}.To see the connection between depth $2$ in subfactor theory and regularity, we refer to \cite{NV},\cite{BG}. Thus, our results may be viewed as a finite-dimensional analogue of the
structure theory of regular inclusions developed in the subfactor and
$C^*$-algebraic settings.

\medskip
\noindent\textbf{Organization of the paper.}
In Section~2 we recall preliminaries on finite-dimensional inclusions and unitary orthonormal bases.
Section~3 introduces normalizer matrices and establishes the classification of regular inclusions.
In Section~4 we study unitary orthonormal bases and show that regular inclusions necessarily have depth two.

\section{Preliminaries}
Throughout this article, we work with a triple $(\mathcal{B}\subseteq
\mathcal{A}, E)$ where $\mathcal{A}, \mathcal{B}$ are finite
dimensional von Neumann algebras with $\mathcal{B} $ being a unital subalgebra of $\mathcal{A}$ and $E:\mathcal{A}\to \mathcal{B}$ is
a conditional expectation map. Such a triple will be referred to, following \cite{BB}, as a subalgebra system.
\begin{dfn}
    Let $(\mathcal{B}\subseteq \mathcal{A}, E)$ be a subalgebra system. A family $\{W_j:0\leq
j\leq (d-1)\}$ (for some $d\in \mathbb{N}$) of elements of
$\mathcal{A}$ is called a \textit{ (right) Pimsner-Popa basis} for
$(\mathcal{B}\subseteq \mathcal{A}, E)$ if
\begin{equation*}\label{right basis} X= \sum _{j=0}^{d-1}W_jE(W_j^*X),
~~\forall ~X\in \mathcal{A}.\end{equation*} It is said to be an
\textit{ unitary orthonormal basis} if $W_j$ is a unitary for every
$j$, and
\begin{equation*}\label{orthonormality} E(W_j^*W_k)= \delta
_{jk},~~\forall ~0\leq j,k\leq (d-1).\end{equation*}
    
\end{dfn}

Throughout, we will take
\begin{equation}\label{decompositionA}
\mathcal{A}= M_{n_0}\oplus M_{n_1}\oplus \cdots \oplus M_{n_{s-1}};
\end{equation}
\begin{equation}\label{decompositionB}
\mathcal{B}= M_{m_0}\oplus M_{m_1}\oplus \cdots \oplus
M_{m_{r-1}}.\end{equation}
The associated dimension vectors of $\mathcal{A}$ and $\mathcal{B}$ are
$n'=\begin{bmatrix}
    n_0&n_1&\cdots&n_{s-1}
\end{bmatrix}^t$ and $m'=\begin{bmatrix}
    m_0&m_1&\cdots&m_{r-1}
\end{bmatrix}^t$ respectively.\\
\noindent
\textbf{Notation:} The block diagonal form $\begin{bmatrix}
    A_0&\\
    &\ddots\\
    &&A_n
\end{bmatrix}$ will be denoted by $\text{bl-diag}(A_1,\cdots, A_n)$.

\begin{dfn}   The {\em inclusion matrix\/}  of $\mathcal{B}$ in
$\mathcal{A}$ is given by an $s\times r$ matrix:
$$A=[a_{ij}]_{0\leq i\leq (s-1); 0\leq j\leq (r-1).} $$
where the algebra $M_{m_j}$  of $\mathcal{B}$ appears $a_{ij}$ times
in the algebra $M_{n_i}$ of $\mathcal{A}.$ Any inclusion of finite
dimensional von Neumann algebras $\mathcal{B}\subseteq \mathcal{A}$
is completely determined up to isomorphism by this triple  $(A, m', n')$ of one inclusion matrix and two dimension
vectors. Note that by dimension counting,
\begin{equation}
\sum _{j=0}^{r-1} a_{ij}m_j= n_i, ~~\forall ~0\leq i\leq (s-1)
\end{equation}
or in matrix  notation: $n'=Am'$.
\end{dfn}
So, throughout this article unless otherwise stated, we consider inclusions of the following form

\begin{align}
    i: \oplus_{j=0}^{r-1} M_{m_j} &\rightarrow \oplus_{i=0}^{s-1}M_{n_i}  \notag \\
    \oplus_{j=0}^{r-1}X_j &\longmapsto \oplus_{i=0}^{s-1} \text{bl-diag}(X_0\otimes\mathbf{1}_{a_{i0}},X_1\otimes\mathbf{1}_{a_{i1}},\cdots ,X_{r-1}\otimes\mathbf{1}_{a_{ir-1}} )\label{inclusion}
\end{align} 
corresponding to the inclusion matrix $A=[a_{ij}]_{0\leq i\leq (s-1); 0\leq j\leq (r-1)}$.

Let $Q_j$ be the projection onto the $j-$th summand in the decomposition $\mathcal{B}=\oplus_{j=0}^{r-1}M_{m_j}$ and similarly let $P_i$ be the projection onto the $i-$th summand in the decomposition $\mathcal{A}=\oplus_{i=0}^{s-1}M_{m_i}$ for $0\leq j \leq (r-1); 0\leq i\leq (s-1).$ Note that $Q_j, P_i$ are minimal central projections of $\mathcal{B} ~\text{and}~\mathcal{A},$ respectively.

\iffalse
\textbf{Notation:} Let $\{E^i_{jk}\}_{1\leq k,l\leq n_i}^{0\leq i\leq s-1}$ be matrix units of $\oplus_{i=0}^{s-1}M_{n_i}$ and $\{Q_j\}_{j=0}^{r-1}$  and $\{P_i\}_{i=0}^{s-1}$ be minimal central projections of $\oplus_{j=0}^{r-1} M_{m_j}$ and $\oplus_{i=0}^{s-1}M_{n_i}$ respectively. Then for each $0\leq l\leq r-1$, the central projections of $\mathcal{B}$ are given by,
\begin{equation}\label{minimal projection}
    Q_l=\oplus_{i=0}^{s-1}\sum\limits_{k=\sum\limits_{j=0}^{l-1} m_{j}a_{ij}+1}^{\sum\limits_{j=0}^{l}m_ja_{ij}}E^i_{kk}
\end{equation} with the convention that, for $l=0$, we take $\sum\limits_{j=0}^{l-1} m_{j}a_{ij}+1=0$ and $a_{i(-1)}=0$ 
\fi

\begin{dfn}
    Consider the inclusions $\mathcal{B}_1\subseteq \mathcal{A}_1$ and $\mathcal{B}_2\subseteq \mathcal{A}_2$ with inclusion matrix  $A_1$ and $A_2$ respectively. Then the inclusion matrix of the direct sum $\mathcal{B}_1\oplus \mathcal{B}_2\subseteq \mathcal{A}_1\oplus\mathcal{A}_2$ is given by bl-diag$(A_1,A_2)$
     and the inclusion matrix of the tensor product $\mathcal{B}_1\otimes \mathcal{B}_2\subseteq \mathcal{A}_1\otimes\mathcal{A}_2$ is given by $A_1\otimes A_2$.

\end{dfn}

\begin{dfn}
    A subalgebra system $(\mathcal{B}\subseteq \mathcal{A}, E)$ is said to have the U-property if it admits a unitary orthonormal basis.
\end{dfn}

\begin{thm}\label{BB1}\cite{BB}
    Let $(\mathcal{B}\subseteq \mathcal{A}, E)$ be a subalgebra system,
with
$Am'=n'$, where $A$ is the inclusion matrix, and
$m', n'$ are dimension vectors. Suppose $(\mathcal{B}\subseteq \mathcal{A}, E)$
admits a unitary orthonormal basis with $ d$-unitary elements.
 Then $A^tn'= dm'$.

\iffalse
\itemsep0em
 Consequently,
$$A^tAm'=dm'; ~~AA^tn'=dn'.$$
In particular, $m', n'$ are Perron-Frobenius
eigenvectors of $A^tA$ and $AA^t$ respectively with eigenvalue $d$.
\fi
\end{thm}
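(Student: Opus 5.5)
The plan is to compare two decompositions of $\mathcal{A}$ as a right $\mathcal{B}$-module. A unitary orthonormal basis will show that $\mathcal{A}$ is free of rank $d$, while the inclusion matrix gives the decomposition of $\mathcal{A}$ directly. Counting multiplicities of the simple right $\mathcal{B}$-modules in the two pictures should yield $A^tn'=dm'$.

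\emph{Step 1: $\mathcal{A}$ is free of rank $d$.} Let $\{W_j\}_{j=0}^{d-1}$ be a unitary orthonormal basis. Define
\[
\Phi:\mathcal{B}^d\to\mathcal{A},\qquad (b_0,\dots,b_{d-1})\mapsto \sum_j W_jb_j .
\]
This map is right $\mathcal{B}$-linear.
\begin{itemize}
\item It is surjective by the basis identity $X=\sum_j W_jE(W_j^*X)$.
\item It is injective: if $\sum_j W_jb_j=0$, apply $E(W_k^*\,\cdot\,)$. The bimodule property of $E$ gives $E(W_k^*W_jb_j)=E(W_k^*W_j)b_j$, and orthonormality then gives $b_k=0$.
\end{itemize}
Hence $\mathcal{A}\cong\mathcal{B}^d$ as right $\mathcal{B}$-modules. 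Only the bimodule property of $E$ is used here, not unitarity or trace-preservation.

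\emph{Step 2: multiplicities in $\mathcal{B}^d$.} Let $V_j\cong\mathbb{C}^{m_j}$ be the row vectors of $M_{m_j}$, viewed as a right $\mathcal{B}$-module on which the other summands act by zero. The $V_j$ are pairwise non-isomorphic and exhaust the irreducible right $\mathcal{B}$-modules. Since $M_{m_j}$ is the direct sum of its $m_j$ rows, $\mathcal{B}\cong\bigoplus_j m_jV_j$, and therefore $\mathcal{B}^d\cong\bigoplus_j dm_jV_j$.

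\emph{Step 3: multiplicities in $\mathcal{A}$, from the inclusion matrix.} Decompose $M_{n_i}$ into its $n_i$ rows, each a copy of $\mathbb{C}^{n_i}$. Under the embedding \eqref{inclusion}, the right action of $\mathcal{B}$ on a row $\mathbb{C}^{n_i}$ is block diagonal, with $X_j$ appearing $a_{ij}$ times. So each row decomposes as $\bigoplus_j a_{ij}V_j$, giving
\[
\mathcal{A}\cong\bigoplus_j\Big(\sum_i n_ia_{ij}\Big)V_j=\bigoplus_j (A^tn')_jV_j .
\]

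\emph{Step 4: conclusion.} Since $\mathcal{B}$ is semisimple, finite-dimensional modules decompose uniquely into irreducibles (Krull--Schmidt). Comparing the multiplicity of each $V_j$ in Steps 2 and 3 gives $(A^tn')_j=dm_j$ for every $j$, that is, $A^tn'=dm'$.

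The main point requiring care is Step 3: one must check that restricting a row of $M_{n_i}$ to $\mathcal{B}$ really gives $a_{ij}$ copies of $V_j$. This is immediate from the block-diagonal form $\mathrm{bl\text{-}diag}(X_0\otimes\mathbf{1}_{a_{i0}},\dots)$, once one notes that $X\otimes\mathbf{1}_a$ acting on rows from the right is $a$ copies of the row representation, up to a permutation of coordinates.
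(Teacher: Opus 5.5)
The paper gives no proof of this statement: it is quoted from \cite{BB} and used as a black box, for instance in the converse direction of Theorem~\ref{regular-basis}. So there is no in-paper argument to compare with. Your proof is correct and self-contained. The map $\Phi$ is a right $\mathcal{B}$-module isomorphism $\mathcal{B}^d\to\mathcal{A}$ exactly as you argue, and your multiplicity count in Step 3 matches the block-diagonal form of the embedding.

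Your route has two useful features.

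First, as you note, it uses only three things: the bimodule property of $E$, orthonormality, and the reconstruction formula. Hence $A^tn'=dm'$ holds for any orthonormal Pimsner--Popa basis of cardinality $d$, unitary or not, and for any conditional expectation. In particular, nothing about which $E$ is chosen (for example, that it preserves the trace $\varphi$ of \eqref{trace}) enters. Conceptually, $A^tn'$ and $dm'$ are the dimension vectors of $\mathrm{End}(\mathcal{A}_{\mathcal{B}})$ and of $M_d(\mathcal{B})$, and Step 1 identifies these two algebras.

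Second, you can make Step 4 entirely elementary and avoid Krull--Schmidt:
\begin{itemize}
\item Fix a minimal projection $q_j\in M_{m_j}\subset\mathcal{B}$.
\item Since $\Phi$ is right $\mathcal{B}$-linear, it restricts to a linear bijection $(\mathcal{B}q_j)^d\to\mathcal{A}q_j$.
\item Here $\dim\mathcal{B}q_j=m_j$.
\item In $M_{n_i}$, the projection $P_iq_j$ has rank $a_{ij}$, so $\dim\mathcal{A}q_j=\sum_i n_ia_{ij}$.
\end{itemize}
Comparing dimensions gives $dm_j=(A^tn')_j$ directly.
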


\begin{dfn}
    The normalizer of the inclusion $\mathcal{B}\subseteq \mathcal{A}$ is defined as \begin{equation*}
        \mathcal{N}_{\mathcal{A}}(\mathcal{B}):=\{U\in \mathcal{U}(\mathcal{A}):U\mathcal{B}U^*=\mathcal{B}\}.
    \end{equation*} where $\mathcal{U(A)}$ denotes the group of unitaries of $\mathcal{A}$.Equivalently, the normalizer can also be defined as \begin{equation*}
        \mathcal{N}_{\mathcal{A}}(\mathcal{B}):=\{U\in \mathcal{U}(\mathcal{A}):Ad_U\in Aut(\mathcal{B})\}.\\
        \end{equation*} where $Ad_U(x)=UxU^*.$ The inclusion $\mathcal{B}\subseteq \mathcal{A}$ is called regular if $\mathcal{A}=\text{span}\;\mathcal{N}_{\mathcal{A}}(\mathcal{B})$.
    
\end{dfn}
The following are the three important examples of regular inclusions that admit a unitary orthonormal basis in the normalizer:
\begin{enumerate}
    \item \textbf{Trivial Inclusion:} $(M_n(\mathbb{C}) \subset M_n(\mathbb{C}), E_0\;)$ where $E_0$  is the unique conditional expectation preserving the usual trace on $M_n(\mathbb{C})$\\
    In this case, $E_0=Id$, $\mathcal{N_A(B)}=\mathcal{U(A)}$ and the identity operator forms the basis.

    \item \textbf{Scalar Inclusion:} $(\mathbb{C} \subset \oplus_{i=0}^{s-1} M_{n_i}(\mathbb{C}), E_1)$ where $E_1$ is the unique conditional expectation preserving the state \begin{equation}\label{state}
\varphi (\oplus X_i)= \frac{1}{\sum _{i=0}^{s-1}n_i^2} \sum
_{i=0}^{s-1}n_i~\mbox {trace}~ (X_i), ~~X_i \in M_{n_i}, 0\leq i\leq
(s-1).\end{equation} \\
     In this inclusion, one can see that $\mathcal{N_A(B)}=\mathcal{U(A)}$ and its admits unitary orthonormal basis as proved in \cite{BB} Theorem 5.3. and \cite{CKP} Theorem 2.1.

    \item \textbf{Diagonal Inclusion:} $(\mathbb{C}^n \subset M_n(\mathbb{C}),E_2)$ where $E_2$  is the unique conditional expectation preserving the normalized trace on $M_n(\mathbb{C})$.\\
    Let $\{e_0,e_2,\cdots,e_{n-1}\}$ be the standard basis of $\mathbb{C}^n$ and let $U$ be the translation unitary  defined by $Ue_k=e_{k+1(mod\;n)}$. Then $\{U^k:k\in \mathbb{Z}_n\}$ is contained in the normalizer of $\mathbb{C}^n$  and form a unitary orthonormal basis. For further details, see \cite{CCKL} Example 2.2.
\end{enumerate}

\textbf{Tensor products and direct sums:}\\
Suppose $(\mathcal{B}_i \subseteq  \mathcal{A}_i, E_i)$ for $i=1,2$
are two finite dimensional inclusions with $U$-property. Suppose
$\{U_1(i) , \ldots , U_{d_i}(i)\}$ is a unitary o.n.b. in $\mathcal{N}_{\mathcal{A}_i}(\mathcal{B}_i)$
for
$(\mathcal{B}_i, \mathcal{A}_i, E_i)$ for $i=1,2.$ Take
$\mathcal{B}=\mathcal{B}_1\otimes \mathcal{B}_2$. Consider it as a
subalgebra of $\mathcal{A}=\mathcal{A}_1\otimes \mathcal{A}_2$  with the inclusion matrix being the tensor product
of inclusion matrices. Then it is seen easily that $E=E_1\otimes
E_2$ is a conditional expectation map from $\mathcal{A}$ to
$\mathcal{B}.$ For this inclusion, $(\mathcal{B}_1\otimes
\mathcal{B}_2\subseteq \mathcal{A}_1\otimes \mathcal{A}_2,
E_1\otimes E_2)$, we can observe that $\{ U_j(1)\otimes U_k(2):
0\leq j\leq d_1; 0\leq k\leq d_2\}$ is a unitary orthonormal basis in $\mathcal{N}_{\mathcal{A}_1\otimes \mathcal{A}_2}(\mathcal{B}_1 \otimes \mathcal{B}_2)$ as $\mathcal{N}_{\mathcal{A}_1}(\mathcal{B}_1)\otimes \mathcal{N}_{\mathcal{A}_2}(\mathcal{B}_2) \subseteq \mathcal{N}_{\mathcal{A}_1\otimes \mathcal{A}_2}(\mathcal{B}_1 \otimes \mathcal{B}_2)$. \\
Under the same setup, if $d_1=d_2$, the subalgebra system
$(\mathcal{B}_1\oplus \mathcal{B}_2\subseteq \mathcal{A}_1\oplus
\mathcal{A}_2, E_1\oplus E_2)$ has $U$-property. In fact, $\{
U_j(1)\oplus U_j(2): 0\leq j \leq d_1\}$ is a unitary orthonormal
basis in $\mathcal{N}_{\mathcal{A}_1\oplus \mathcal{A}_2}(\mathcal{B}_1 \oplus \mathcal{B}_2)$ as $\mathcal{N}_{\mathcal{A}_1}(\mathcal{B}_1)\oplus \mathcal{N}_{\mathcal{A}_2}(\mathcal{B}_2) \subseteq \mathcal{N}_{\mathcal{A}_1\oplus \mathcal{A}_2}(\mathcal{B}_1 \oplus \mathcal{B}_2)$. \\

\iffalse
In the next section, we will show that every regular inclusion of finite-dimensional $\mathbf{C}^*$-algebras can be obtained by forming suitable tensor products and direct sums of the inclusions described above.\newline
\fi

\section{Regular inclusions in finite dimension}
In this section, we develop a comprehensive framework for understanding regular inclusions of finite-dimensional von Neumann algebras. In Subsection 3.1, we introduce the notion of a normalizer matrix, a combinatorial invariant that encodes the structure of the normalizer entirely at the level of the inclusion matrix, and show that every normalizer matrix can be brought to a block-diagonal form reflecting the natural partitioning of row supports. Subsection 3.2 focuses on regular inclusions within matrix algebras, where we provide a detailed description of normalizers and establish that regularity imposes strict homogeneity conditions on the summands and equality of inclusion matrix entries. Finally, in Subsection 3.3, we extend these results to the general setting, proving that an inclusion is regular if and only if its inclusion matrix is a normalizer matrix and the dimensions of summands corresponding to each row support are equal. As a consequence, every regular inclusion decomposes into direct sums and tensor products of the fundamental building blocks identified in Section 2. This exposition lays the foundation for the subsequent analysis of unitary orthonormal bases.
\subsection{ Normalizer matrices}
To analyze the internal structure of regular finite-dimensional inclusions, we introduce the notion of a normalizer matrix. This matrix encodes  the combinatorial structure of the normalizer. Our main goal is to show that every normalizer matrix can be permuted into a block-diagonal form, where each diagonal block has rows with equal non-zero entries and all off-diagonal blocks are zero. This reflects the natural partition of row indices according to identical support patterns. The block-diagonal form provides a convenient framework for understanding the combinatorial structure of normalizers . As we will see, this concept will be used crucially in later sections to establish structural results, construct unitary orthonormal bases, and describe decompositions into fundamental building blocks.

Let $A = [a_{ij}]_{0 \le i \le (s-1),\, 0 \le j \le (r-1)}$ be an $s \times r$ matrix.  
For each $i \in \{0,1,\ldots,(s-1)\}$, we define the \textbf{support of the $i$-th row} of $A$ by
\[
Y_i := \{\, j \in \{0,1,\ldots,(r-1)\} \mid a_{ij} \neq 0 \,\}.
\]
We equip $Y_i$ with the \emph{natural order} inherited from $\{0,1,\ldots,r-1\}$; that is, if $Y_i = \{ x_1, x_2, \ldots, x_n \},$ then $x_1 < x_2 < \cdots < x_n.$

\begin{ppsn}\label{eqclass}
    Let $A$ and the row supports $Y_i$ be as defined above. Then the following statements are equivalent:
\begin{enumerate}
    \item For any two rows $i$ and $k$, if there exists a column $j$ such that $a_{ij} = 0$ and $a_{kj} \neq 0$, then for every column $l$, 
    \[
        a_{il} \neq 0 \;\; \Longrightarrow \;\; a_{kl} = 0.
    \]
    \item The row supports $\{Y_i\}_{i=0}^{s-1}$ form a partition of $\{0,1,\ldots,s-1\}$ under the equivalence relation
    \[
        x \sim y \quad \Longleftrightarrow \quad Y_x = Y_y.
    \]
\end{enumerate}\end{ppsn}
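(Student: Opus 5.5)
The plan is to read statement (2) as saying that any two row supports $Y_i$ and $Y_k$ are either equal or disjoint. Equivalently, the distinct sets among $\{Y_i\}$ are pairwise disjoint, so they partition the set of columns they cover, and the rows are grouped into the classes of $x\sim y \Leftrightarrow Y_x=Y_y$. Statement (1) translates into set language as follows: if $Y_k\not\subseteq Y_i$ (that is, some $j\in Y_k\setminus Y_i$), then $Y_i\cap Y_k=\emptyset$. The proof is then a short argument with sets in both directions.

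\textbf{(1) $\Rightarrow$ (2).} Take rows $i,k$ with $Y_i\cap Y_k\neq\emptyset$; I will show $Y_i=Y_k$. Suppose instead $Y_k\not\subseteq Y_i$, so there is a $j$ with $a_{ij}=0$ and $a_{kj}\neq 0$. Then (1) gives $a_{kl}=0$ whenever $a_{il}\neq 0$, which means $Y_i\cap Y_k=\emptyset$, a contradiction. Hence $Y_k\subseteq Y_i$. Applying (1) with the roles of $i$ and $k$ swapped gives $Y_i\subseteq Y_k$, so $Y_i=Y_k$. Therefore any two supports are equal or disjoint, and the distinct supports form a partition. The relation $\sim$ is obviously an equivalence relation, and its classes group together the rows sharing a common block of columns.

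\textbf{(2) $\Rightarrow$ (1).} Suppose there is a column $j$ with $a_{ij}=0$ and $a_{kj}\neq0$. Then $j\in Y_k\setminus Y_i$, so $Y_i\neq Y_k$. By (2) the two supports must be disjoint. Hence $a_{il}\neq0$, i.e.\ $l\in Y_i$, forces $l\notin Y_k$, i.e.\ $a_{kl}=0$.

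The only real obstacle is interpretive rather than technical. As written, (2) says the supports partition $\{0,\ldots,s-1\}$, but the supports are subsets of the column index set $\{0,\ldots,r-1\}$. I would therefore state explicitly at the start that (2) is taken to mean "distinct row supports are pairwise disjoint". I would also note that a zero row, with empty support, causes no trouble: it is disjoint from every support, and (1) holds vacuously for it in the role of $i$.
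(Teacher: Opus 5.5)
Your proposal is correct and is essentially the paper's argument. For (1)$\Rightarrow$(2) the paper assumes two supports are neither disjoint nor identical and contradicts (1), which is the same reasoning as your double inclusion applying (1) in both orders; for (2)$\Rightarrow$(1) it uses $j\in Y_k\setminus Y_i$ and the disjointness of distinct supports exactly as you do, and your reading of (2) as ``distinct row supports are pairwise disjoint'' is precisely how the paper's proof uses that condition, despite the statement's reference to $\{0,\ldots,s-1\}$.
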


\begin{prf}
Suppose that the supports $Y_i$ and $Y_k$ are neither disjoint nor identical. Then there exist indices
\[
j \in Y_k \setminus Y_i \quad \text{(or } j \in Y_i \setminus Y_k \text{)} \quad \text{and} \quad l \in Y_i \cap Y_k.
\]
By the definition of row supports, this implies either $a_{ij} = 0$ and $a_{kj} \neq 0$, or $a_{ij} \neq 0$ and $a_{kj} = 0$, while both $a_{il}$ and $a_{kl}$ are nonzero. This contradicts the structural property required by condition (1). Therefore, for any $i \neq k$, the sets $Y_i$ and $Y_k$ are either disjoint or identical.

Conversely, assume that the family $\{Y_i\}$ forms a partition of $\{0,1,\dots,(s-1)\}$. Let $a_{ij} = 0$ and $a_{kj} \neq 0$ for some column $j$. Then $j \in Y_k \setminus Y_i$. If there exists a column $l$ such that $a_{il} \neq 0$ and $a_{kl} \neq 0$, then $l \in Y_i \cap Y_k$, which contradicts the assumption that the row supports form a partition. Hence, $a_{kl} = 0$ as required.
\qed 
\end{prf}

Recall that a matrix is called \emph{irredundant} if none of its rows or columns is zero.

\begin{dfn}\label{normalizer matrix}
    An irredundant matrix $A = [a_{ij}]$ is called a \textbf{normalizer matrix} if it satisfies the following conditions:
\begin{enumerate}
    \item In each row of $A$, all nonzero entries are equal.
    \item The row supports form a partition of the index set $\{0,1,\ldots,(s-1)\}$, where two indices $x$ and $y$ are equivalent if and only if
    \[
        x \sim y \quad \Longleftrightarrow \quad Y_x = Y_y .
    \]
\end{enumerate}\end{dfn}
We shall see later that this terminology reflects the fact that such matrices precisely encode the structure of the normalizer of the inclusion.

\begin{dfn}\cite{GHJ}
  Two $r\times s$ matrices $X_1$ and $X_2$ are defined to be {\em pseudo-equivalent} if there exist permutations matrices $P$ and $Q$ such that $X_2=PX_1Q$, namely appropriate exchanges of rows and columns convert $X_1$ to $X_2$.
\end{dfn}
The following proposition shows that pseudo-equivalent inclusion matrices give rise to isomorphic finite-dimensional inclusions. Although this fact is well known to experts, we include a proof for the convenience of the reader.
\begin{ppsn}\label{ppsn2}
   Let $\mathcal B \subset \mathcal A$ be a finite-dimensional inclusion with inclusion matrix $A$. 
If $B$ is pseudo-equivalent to $A$, say $B = PAQ$ for some permutation matrices $P$ and $Q$, then there exists an inclusion
\[
\widetilde{\mathcal B}
= \bigoplus_{j=0}^{r-1} M_{m_{\tau(j)}}
\;\subseteq\;
\bigoplus_{i=0}^{s-1} M_{n_{\sigma(i)}}
= \widetilde{\mathcal A},
\]
whose inclusion matrix is $B$, and such that
\[
(\mathcal B \subset \mathcal A) \cong (\widetilde{\mathcal B} \subset \widetilde{\mathcal A}).
\]\end{ppsn}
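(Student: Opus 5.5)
Since a permutation matrix acts by a bijection on indices, the plan is to identify $P$ and $Q$ with permutations $\sigma$ of $\{0,\dots,s-1\}$ and $\tau$ of $\{0,\dots,r-1\}$. Concretely, $B=PAQ$ means $b_{ij}=a_{\sigma(i)\tau(j)}$ for all $i,j$; the exact convention (whether $\sigma$ or $\sigma^{-1}$ appears) is fixed once and for all by reading off the entries of $PAQ$. With this choice I define $\widetilde{\mathcal A}=\bigoplus_i M_{n_{\sigma(i)}}$ and $\widetilde{\mathcal B}=\bigoplus_j M_{m_{\tau(j)}}$. The embedding $\widetilde{\mathcal B}\to\widetilde{\mathcal A}$ is given by formula \eqref{inclusion} with matrix $B$: the $i$-th component is $\text{bl-diag}(Y_0\otimes\mathbf 1_{b_{i0}},\dots,Y_{r-1}\otimes\mathbf 1_{b_{i,r-1}})$. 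This is well defined because the dimension count works out: $\sum_j b_{ij}m_{\tau(j)}=\sum_j a_{\sigma(i)\tau(j)}m_{\tau(j)}=n_{\sigma(i)}$, using $n'=Am'$ and reindexing the sum by $\tau$.

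Next I build the isomorphism of inclusions. Let $\beta:\mathcal B\to\widetilde{\mathcal B}$ send $\oplus_j X_j$ to $\oplus_j X_{\tau(j)}$, and let $\alpha_0:\mathcal A\to\widetilde{\mathcal A}$ send $\oplus_i Z_i$ to $\oplus_i Z_{\sigma(i)}$. Both are $*$-isomorphisms, since they only permute summands. The composite $\alpha_0\circ\iota_A$ has $i$-th component $\text{bl-diag}(X_0\otimes\mathbf 1_{a_{\sigma(i)0}},\dots)$, with the blocks in column order $0,1,\dots,r-1$. On the other hand, $\iota_B\circ\beta$ has $i$-th component with blocks $X_{\tau(j)}\otimes\mathbf 1_{a_{\sigma(i)\tau(j)}}$, listed in the order $j=0,\dots,r-1$. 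These two block-diagonal matrices have the same blocks, only in a different order.

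To fix the ordering, for each $i$ I choose the permutation matrix $W_i\in M_{n_{\sigma(i)}}$ that rearranges the orthogonal decomposition $\mathbb C^{n_{\sigma(i)}}=\bigoplus_j \mathbb C^{m_j}\otimes\mathbb C^{a_{\sigma(i)j}}$ into the order $\tau(0),\tau(1),\dots$. This is a block permutation that preserves the internal structure of each block, so $W_i(\text{bl-diag in order }j)W_i^*=\text{bl-diag in order }\tau(j)$. I then set $W=\oplus_i W_i$ and $\alpha=\operatorname{Ad}_W\circ\alpha_0$. Then $\alpha$ is a $*$-isomorphism $\mathcal A\to\widetilde{\mathcal A}$ satisfying $\alpha\circ\iota_A=\iota_B\circ\beta$, which is exactly the required isomorphism of inclusions.

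The only real obstacle is bookkeeping: keeping the $\sigma/\sigma^{-1}$ convention consistent, and checking that the block-reordering unitary commutes correctly with the tensor structure $X\otimes\mathbf 1_a$. Both are routine once the indices are written out.
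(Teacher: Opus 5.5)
Your proof is correct and follows the paper's route: permute the summands of $\mathcal A$ by $\sigma$ and those of $\mathcal B$ by $\tau$, then check that the two embeddings agree. The only difference is that you make explicit, through the block-permutation unitary $W=\oplus_i W_i$, the reordering of the blocks $X_j\otimes\mathbf 1_{a_{\sigma(i)j}}$; the paper absorbs this step tacitly by writing the original embedding $\mathcal B\subset\mathcal A$ with its blocks already listed in $\tau$-order, so your version is the more careful of the two.
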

 
\begin{prf}
   Let $i \colon \mathcal{B} \longrightarrow \mathcal{A}$
denote the inclusion map corresponding to the inclusion matrix $A$. Explicitly, this inclusion is given by
\[
\bigoplus_{j=0}^{r-1} X_j
\;\longrightarrow\;
\bigoplus_{i=0}^{s-1}
\mathrm{bl\text{-}diag}
\bigl(
X_{\tau(0)} \otimes \mathbf{1}_{a_{i\tau(0)}},
\ldots,
X_{\tau(r-1)} \otimes \mathbf{1}_{a_{i\tau(r-1)}}
\bigr),
\]
which realizes the inclusion $\mathcal{B} \subset \mathcal{A}$ with inclusion matrix $A$.

Similarly, consider the inclusion $\tilde{i} \colon \widetilde{\mathcal{B}} \longrightarrow \widetilde{\mathcal{A}}$
corresponding to the inclusion matrix $PAQ$, given by
\[
\bigoplus_{j=0}^{r-1} X_{\tau(j)}
\;\longrightarrow\;
\bigoplus_{i=0}^{s-1}
\mathrm{bl\text{-}diag}
\bigl(
X_{\tau(0)} \otimes \mathbf{1}_{a_{\sigma(i)\tau(0)}},
\ldots,
X_{\tau(r-1)} \otimes \mathbf{1}_{a_{\sigma(i)\tau(r-1)}}
\bigr).
\]

Define a $*$-isomorphism $\theta \colon \mathcal{A} \longrightarrow \widetilde{\mathcal{A}}$ by
\[
\theta\!\left( \bigoplus_{i=0}^{s-1} Y_i \right)
=
\bigoplus_{i=0}^{s-1} Y_{\sigma(i)}.
\]
Clearly, $\theta$ is a $*$-isomorphism from $\mathcal{A}$ onto $\widetilde{\mathcal{A}}$.

To show that the inclusions are isomorphic, it suffices to verify that
\[
\theta\bigl(i(\mathcal{B})\bigr) = \tilde{i}(\widetilde{\mathcal{B}}).
\]
Indeed, we have
\begin{equation*}
\begin{aligned}
\theta\!\left(
\bigoplus_{i=0}^{s-1}
\mathrm{bl\text{-}diag}
\bigl(
X_{\tau(0)} \otimes \mathbf{1}_{a_{i\tau(0)}},
\ldots,
X_{\tau(r-1)} \otimes \mathbf{1}_{a_{i\tau(r-1)}}
\bigr)
\right)
=
\bigoplus_{i=0}^{s-1}
\mathrm{bl\text{-}diag}
&\bigl(
X_{\tau(0)} \otimes \mathbf{1}_{a_{\sigma(i)\tau(0)}},\ldots, \\
&X_{\tau(r-1)} \otimes \mathbf{1}_{a_{\sigma(i)\tau(r-1)}}
\bigr).
\end{aligned}
\end{equation*}
which coincides with $\tilde{i}(\widetilde{\mathcal{B}})$. Hence, the inclusions are isomorphic.
\qed
\end{prf}

\blmma\label{pseudo}
The class of normalizer matrices is preserved under pseudo-equivalence.
\elmma
\begin{prf}
Let $A$ be a {normalizer matrix}, and let $B$ be pseudo-equivalent to $A$. By definition, there exist permutation matrices $P$ and $Q$, corresponding to permutations $\sigma$ and $\tau$ of the row and column indices, respectively, such that $B = PAQ.$ Equivalently, if $A = [a_{ij}]$, then $B = [b_{ij}]$ with
\[
b_{ij} = a_{\sigma(i)\tau(j)}.
\]

We first show that $B$ is irredundant. Suppose, for contradiction, that the $i$-th row of $B$ is zero. Then
\[
b_{ik} = a_{\sigma(i)\tau(k)} = 0 \quad \text{for all } k,
\]
which implies that the $\sigma(i)$-th row of $A$ is zero, contradicting the irredundancy of $A$. Hence, no row of $B$ is zero. Similar argument shows that no column of $B$ is zero.

Next, we verify that in each row of $B$, all nonzero entries are equal. Indeed, if the nonzero entries in the $i$-th row of $B$ were not all equal, then the same would be true for the nonzero entries in the $\sigma(i)$-th row of $A$, contradicting the fact that $A$ is a normalizer matrix.

Finally, consider two rows $i$ and $k$ of $B$. Suppose there exists a column $j$ such that
\[
b_{ij} = a_{\sigma(i)\tau(j)} = 0
\quad \text{and} \quad
b_{kj} = a_{\sigma(k)\tau(j)} \neq 0.
\]
Then, by the defining property of a normalizer matrix, for every column $l$,
\[
a_{\sigma(i)\tau(l)} \neq 0 \;\Rightarrow\; a_{\sigma(k)\tau(l)} = 0.
\]
Equivalently,
\[
b_{il} \neq 0 \;\Rightarrow\; b_{kl} = 0.
\]

Thus $B$ satisfies all the defining properties of a normalizer matrix. Hence, the class of normalizer matrices is preserved under pseudo-equivalence.  \qed
\end{prf}

\begin{ppsn}\label{ppsn1}
Every normalizer matrix $A$ is pseudo-equivalent to a block-diagonal matrix
\[
\mathrm{bl\text{-}diag}(A_{11}, A_{22}, \ldots, A_{pp}),
\]
where each diagonal block $A_{ii}$ has the property that all entries in every row are equal and nonzero, and all off-diagonal blocks are zero.
\end{ppsn}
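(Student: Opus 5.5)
Our plan is to permute rows so that rows with identical supports become adjacent, and then permute columns so that the corresponding supports become consecutive intervals. Condition (2) of Definition \ref{normalizer matrix}, read through Proposition \ref{eqclass}, says that any two row supports $Y_i, Y_k$ are either equal or disjoint. So the relation $i\sim k \iff Y_i=Y_k$ partitions the row indices into classes $R_1,\ldots,R_p$, and these classes come with pairwise disjoint column sets $C_1,\ldots,C_p$, where $C_t$ is the common support of the rows in $R_t$.

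\textbf{Columns.} Irredundancy of $A$ means every column $j$ has a nonzero entry $a_{ij}$, so $j\in Y_i=C_t$ for the class $R_t$ containing $i$. Hence $C_1\sqcup\cdots\sqcup C_p=\{0,\ldots,r-1\}$, and each column lies in exactly one $C_t$. Each $C_t$ is nonempty, since no row is zero.

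\textbf{Permutations.} Choose a permutation $\sigma$ of the row indices listing $R_1$ first, then $R_2$, and so on. Choose a permutation $\tau$ of the column indices listing $C_1$ first, then $C_2$, and so on. Let $P,Q$ be the associated permutation matrices and set $B=PAQ$, so $b_{ij}=a_{\sigma(i)\tau(j)}$ as in the proof of Lemma \ref{pseudo}. Partition $B$ into blocks $B_{tu}$ with rows indexed by $R_t$ and columns indexed by $C_u$.

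\textbf{Block structure.} For $t\neq u$, a row in $R_t$ has support $C_t$, which is disjoint from $C_u$, so $B_{tu}=0$. For $t=u$, every entry of $B_{tt}$ is nonzero, because the support of each row in $R_t$ is exactly $C_t$. By condition (1) of Definition \ref{normalizer matrix}, the nonzero entries within a row are equal. Therefore each row of $A_{tt}:=B_{tt}$ is constant and nonzero, though different rows of the block may carry different constants, which the statement allows. This gives $B=\mathrm{bl\text{-}diag}(A_{11},\ldots,A_{pp})$.

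The argument is essentially routine. The only point needing care is the step that uses irredundancy to show that the $C_t$ cover all columns; without it, zero columns would survive outside the blocks. There is also a small notational mismatch: Definition \ref{normalizer matrix} speaks of a partition of row indices, while the $Y_i$ are sets of column indices. We read condition (2) as "distinct row supports are disjoint", which is exactly what Proposition \ref{eqclass} provides.
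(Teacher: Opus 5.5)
Your proof is correct and follows the same route as the paper. You group rows by common support, order the columns by the pairwise disjoint supports $C_1,\ldots,C_p$, and read off that the off-diagonal blocks vanish while each diagonal block has constant nonzero rows. The differences are cosmetic: you make explicit that irredundancy forces the $C_t$ to exhaust all columns, which the paper uses implicitly when it defines $\tau$, and you apply row-constancy directly to $A$ rather than first invoking Lemma~\ref{pseudo} to see that $PAQ$ is again a normalizer matrix.
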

\begin{prf}
Let $A$ be a normalizer matrix, and let $\{R_k\}_{k=1}^{p}$ denote the partition of $\{0,1,\ldots,s-1\}$ induced by the equivalence relation on row indices described above.

For each $k \in \{1,2,\ldots,p\}$, set $s_k = |R_k|$ and let $r_k = |Y_x|$ for any $x \in R_k$. Write
\[
R_k = \{ k_1, k_2, \dots, k_{s_k} \}, \qquad
C_k = \{\,  k^1, k^2, \dots, k^{r_k} \,\} = Y_x \text{ for any } x \in R_k.
\]
        Define two permutations $\sigma$ and $\tau$ as follows:
\begin{equation}\label{sigma}
\sigma =
\begin{pmatrix}
0 & 1 & \cdots & s_1-1 & s_1 & \cdots & \sum_{i=1}^{2} s_i -1 & \cdots & \sum_{i=1}^{p} s_i & \cdots & s-1 \\
1_1 & 1_2 & \cdots & 1_{s_1} & 2_1 & \cdots & 2_{s_2} & \cdots & p_1 & \cdots & p_{s_p}
\end{pmatrix}
\end{equation}
\begin{equation}\label{tau}
\tau =
\begin{pmatrix}
0 & 1 & \cdots & r_1-1 & r_1 & \cdots & \sum_{i=1}^{2} r_i -1 & \cdots & \sum_{i=1}^{p} r_i & \cdots & r-1 \\
1^1 & 1^2 & \cdots & 1^{r_1} & 2^1 & \cdots & 2^{r_2} & \cdots & p^1 & \cdots &  p^{r_p}
\end{pmatrix}
\end{equation}

Let $P$ and $Q$ be the permutation matrices corresponding to $\sigma$ and $\tau$, respectively. Then define
\[
B := PAQ = [a_{\sigma(i)\tau(j)}]_{0 \le i \le s-1,\, 0 \le j \le r-1}.
\]
By Lemma \ref{pseudo}, since $A$ is an normalizer matrix and $B$ is pseudo-equivalent to $A$, the matrix $B$ is also an normalizer matrix. With respect to the chosen orderings, $B$ decomposes into block form 
$B=[A_{kl}]_{1\leq k,l\leq p}$where each block $$A_{kl}=\begin{pmatrix}
    a_{k_1l^1}&a_{k_1l^2}&\cdots&a_{k_1l^{r_l}}\\
    a_{k_2l^1}&a_{k_2l^2}&\cdots&a_{k_2l^{r_l}}\\
    \vdots&&\ddots&\vdots\\
    a_{k_{s_k}l^1}&a_{k_{s_k}l^2}&\cdots&a_{k_{s_k}l^{r_l}}
\end{pmatrix}$$ is a $s_k\times r_l$ matrix.\\

\textbf{Claim 1:} \textit{$A_{kl} = 0$ for $k \neq l$}.\\
Let $a_{k_x l^y}$ be an arbitrary entry of $A_{kl}$ with $k_x \in R_k$ and $ l^y \in C_l$. If $k \neq l$, then $l^y \notin C_k = Y_{k_x}$. Hence, $a_{k_x l^y} = 0$, proving the claim.

\medskip
\textbf{Claim 2:} \textit{For each $k$, every row of $A_{kk}$ consists of equal and non-zero entries.}\\
Fix $k$. For any $k_x \in R_k$ and $k^y \in C_k = Y_{k_x}$, we have $a_{k_x k^y} \neq 0$ by the definition of row support. Since $B$ is a normalizer matrix, all non-zero entries in a row are equal. Therefore, each row of $A_{kk}$ consists of equal non-zero entries.

\medskip
Combining Claims 1 and 2, we obtain
\[
PAQ = \mathrm{bl\text{-}diag}(A_{11}, A_{22}, \dots, A_{pp}),
\]
where each diagonal block $A_{ii} = [a_{i_x i^y}]_{1 \le x \le s_i,\, 1 \le y \le r_i}$ has rows with equal non-zero entries. This completes the proof. \qed
 \end{prf}

\subsection{Normalizers of Subalgebras of Matrix Algebras}
Throughout this subsection, a matrix algebra means a full matrix algebra. In this subsection, we examine regular inclusions where the ambient algebra is a full matrix algebra. Exploiting the block structure induced by the inclusion matrix, we characterize the normalizer and show that regularity imposes strong homogeneity constraints: the summands of the subalgebra must have equal dimensions, and all nonzero entries of the inclusion matrix must coincide. These observations provide a precise description of normalizers and set the stage for extending regularity results to the general finite-dimensional case in the next subsection.
\iffalse
Throughout, let
\[
B = \bigoplus_{j=0}^{r-1} M_{m_j}(\mathbb{C}) \subset M_n(\mathbb{C}) = A
\]
be an inclusion of finite-dimensional von Neumann algebras, with inclusion matrix $$A = \begin{bmatrix}
a_0 & a_1 & \cdots & a_{r-1}
\end{bmatrix},$$ so that $n = \sum_{j=0}^{r-1} m_j a_j .$
\fi
\medskip

We first show that regularity forces all entries of the inclusion matrix to coincide.

\begin{ppsn}\label{lmma1}
    If  $\mathbb{C}^r \subseteq M_n(\mathbb{C})$ is a regular inclusion, then all the entries of the inclusion matrix are equal.
\end{ppsn}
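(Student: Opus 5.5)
Let the inclusion matrix be the $1\times r$ row $[a_0\;a_1\;\cdots\;a_{r-1}]$, so $n=\sum_j a_j$ and $\mathcal B=\mathbb{C}^r$ sits in $M_n$ as the diagonal matrices that are constant on the blocks $Q_j$ of rank $a_j$; here $Q_j$ are the minimal central projections (equivalently the minimal projections) of $\mathcal B$, with $\operatorname{tr}(Q_j)=a_j$. The plan is to determine the normalizer explicitly and then compare its span with $M_n$.

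First I describe a normalizing unitary. If $U\in\mathcal N_{M_n}(\mathcal B)$, then $\mathrm{Ad}_U$ restricts to an automorphism of the abelian algebra $\mathbb{C}^r$. Such an automorphism permutes the minimal projections, so there is a permutation $\pi$ of $\{0,\dots,r-1\}$ with $UQ_jU^*=Q_{\pi(j)}$ for all $j$. Taking traces gives $a_{\pi(j)}=a_j$, so $\pi$ preserves the partition of the indices into the level sets $L_c=\{j: a_j=c\}$. From $UQ_j=Q_{\pi(j)}U$ we get $Q_kUQ_j=0$ unless $k=\pi(j)$. Hence, relative to the decomposition $\mathbb{C}^n=\bigoplus_j Q_j\mathbb{C}^n$, $U$ is a block-permutation matrix. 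Its only nonzero blocks are the $Q_{\pi(j)}UQ_j$, and these can join only blocks of equal size.

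Next I pass to the span. Every normalizing unitary satisfies $Q_kUQ_j=0$ whenever $a_k\neq a_j$. This is a linear condition, so the whole linear span of $\mathcal N_{M_n}(\mathcal B)$ lies in the subspace
\[
\mathcal S=\{X\in M_n : Q_kXQ_j=0 \text{ whenever } a_k\neq a_j\}.
\]
Regularity says this span equals $M_n$, so $M_n\subseteq\mathcal S$. Suppose $a_k\neq a_j$ for some $k,j$. The matrix unit joining a basis vector in the range of $Q_j$ to one in the range of $Q_k$ is nonzero and lies outside $\mathcal S$, which is a contradiction. Therefore all $a_j$ are equal.

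The main point needing care is the first step, namely that $\mathrm{Ad}_U$ permutes the $Q_j$. This holds because any $*$-automorphism of $\mathbb{C}^r$ maps minimal projections to minimal projections, and $Q_j$ are exactly the minimal projections of $\mathcal B$. The condition $U\mathcal BU^*=\mathcal B$ makes $\mathrm{Ad}_U|_{\mathcal B}$ such an automorphism. Everything after that is trace bookkeeping and a linear-span argument.
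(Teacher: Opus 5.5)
Your proof is correct and follows essentially the paper's argument: normalizing unitaries permute the minimal projections $Q_j$ while preserving rank, so the span of the normalizer lies in a proper subspace of $M_n$ that misses any operator joining blocks of different sizes. The only difference is packaging. The paper uses invariance of $K=\bigoplus_{j:\,a_j=\min_i a_i}\operatorname{Ran}(Q_j)$ and of $K^\perp$, together with a rank-one operator mapping $K^\perp$ into $K$. You instead use the finer linear condition $Q_kXQ_j=0$ for $a_k\neq a_j$, which is the block-permutation structure the paper proves later in Proposition~\ref{ppsn:block_structure}.
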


\begin{prf}
Let the inclusion matrix for inclusion $\mathcal B=\mathbb C^r\subset M_n(\mathbb C)$ be $\begin{bmatrix}
a_0&a_1&\cdots&a_{r-1}
\end{bmatrix}$ and let $Q_0,Q_1,\ldots,Q_{r-1}$ be the minimal central projections of $\mathcal B$, where
\[
a_j=\operatorname{rank}(Q_j),\qquad j=0,\ldots,r-1.
\]

Assume, toward a contradiction, that the entries of the inclusion matrix are not all equal. Reordering indices if necessary, we may assume
\[
a_0=\cdots=a_m<a_{m+1}\le\cdots\le a_{r-1}
\]
for some $m<r-1$.

For each $j$, let
\[
H_j:=\operatorname{Ran}(Q_j)\subset\mathbb C^n,
\]
so that
\[
\mathbb C^n=\bigoplus_{j=0}^{r-1} H_j.
\]
Define
\[
K:=H_0\oplus\cdots\oplus H_m.
\]

Let $U\in\mathcal N_{M_n(\mathbb C)}(\mathcal B)$. Since conjugation by $U$ leaves $\mathcal B$ invariant and the projections
$Q_0,\ldots,Q_{r-1}$ are precisely the minimal projections of $\mathcal B$, there exists a unique permutation $\sigma\in S_r$ such that
\[
UQ_jU^*=Q_{\sigma(j)} \qquad\text{for all } j.
\]
Taking ranges, we obtain
\[
U(H_j)=H_{\sigma(j)} \qquad\text{for all } j.
\]
As $U$ is unitary, it preserves dimensions, and hence
\[
\dim H_j=\dim H_{\sigma(j)} \qquad\text{for all } j.
\]

By construction, the subspaces $H_0,\ldots,H_m$ are exactly those of minimal dimension among the $H_j$. Since $U$ preserves dimensions and permutes the family $\{H_j\}$, it follows that
\[
\sigma(\{0,1,\ldots,m\})=\{0,1,\ldots,m\}.
\]
Consequently,
\[
U(K)=\bigoplus_{j=0}^m U(H_j)
=\bigoplus_{j=0}^m H_{\sigma(j)}
=K.
\]

Since $U$ is unitary and leaves $K$ invariant, it also leaves the orthogonal complement $K^\perp$ invariant. Indeed, for any $|f\rangle\in K^\perp$ and $|x\rangle\in K$,
\[
\langle Uf,x\rangle=\langle f,U^*x\rangle=0,
\]
because $U^*|x\rangle\in K$. Hence $U|f\rangle\in K^\perp$, and therefore
\[
U(K^\perp)=K^\perp.
\]

Choose unit vectors
\[
|f\rangle\in K^\perp, \qquad |e\rangle\in H_0\subset K,
\]
and define a rank-one operator $ |e\rangle\langle f|
\in M_n(\mathbb C)$.
Then $|e\rangle\langle f|(|f\rangle)=|e\rangle\neq0$, so $T$ maps a vector from $K^\perp$ into $K$.

On the other hand, every normalizing unitary $U$ satisfies $U(K^\perp)\subset K^\perp$, and the same is therefore true for any finite linear combination of normalizing unitaries. Consequently, no such operator can map a vector from $K^\perp$ into $K$. It follows that $T$ does not belong to the linear span of the normalizing unitaries.

Since the inclusion $\mathbb C^r\subset M_n(\mathbb C)$ is regular, the linear span of the normalizing unitaries equals $M_n(\mathbb C)$. The existence of the operator $T$ constructed above therefore contradicts regularity. Hence the assumption that the entries of the inclusion matrix are not all equal is false, and all entries of the inclusion matrix must be equal.\qed
\end{prf}

\begin{ppsn}\label{inducedreg}
    If inclusion $\mathcal{B}\subseteq \mathcal{A}$ is regular, then for every central projection $P\in \mathcal{Z(A)}$, the induced inclusion $P\mathcal{B}\subseteq P\mathcal{A}$ is also regular.
\end{ppsn}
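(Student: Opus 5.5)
The plan is to compress unitaries of $\mathcal{A}$ by the central projection $P$. Let $P\in\mathcal{Z}(\mathcal{A})$ be a nonzero central projection. We regard $P\mathcal{A}$ as a unital algebra with unit $P$, and $P\mathcal{B}=\{PX : X\in\mathcal{B}\}\subseteq P\mathcal{A}$ as a unital $*$-subalgebra. Since $P$ is central in $\mathcal{A}$, the map $X\mapsto PX$ is a unital $*$-homomorphism $\mathcal{A}\to P\mathcal{A}$. The key step is the following inclusion:
\[
P\,\mathcal{N}_{\mathcal{A}}(\mathcal{B})\subseteq \mathcal{N}_{P\mathcal{A}}(P\mathcal{B}).
\]
Once this is established, regularity passes down at once. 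Given $Y\in P\mathcal{A}$, we have $Y=PY$, and $Y\in\mathcal{A}=\operatorname{span}\mathcal{N}_{\mathcal{A}}(\mathcal{B})$. Hence $Y=\sum_k c_kU_k$ with each $U_k\in\mathcal{N}_{\mathcal{A}}(\mathcal{B})$. Multiplying by $P$ gives $Y=\sum_k c_k\,PU_k$, which lies in $\operatorname{span}\mathcal{N}_{P\mathcal{A}}(P\mathcal{B})$.

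To verify the inclusion, take $U\in\mathcal{N}_{\mathcal{A}}(\mathcal{B})$ and set $V=PU$.
\begin{enumerate}
\item $V$ is a unitary of $P\mathcal{A}$. Because $P$ commutes with $U$ and $U^*$, we get $V^*V=PU^*UP=P$, and likewise $VV^*=P$.
\item $V$ normalizes $P\mathcal{B}$. For $X\in\mathcal{B}$, we compute $V(PX)V^*=PUXU^*P=P(UXU^*)$. Since $UXU^*\in\mathcal{B}$, this element lies in $P\mathcal{B}$. So $V(P\mathcal{B})V^*\subseteq P\mathcal{B}$.
\item Equality holds. Applying the same computation to $U^*$, which is also in $\mathcal{N}_{\mathcal{A}}(\mathcal{B})$, and using $V^*=PU^*$, we get $V^*(P\mathcal{B})V\subseteq P\mathcal{B}$. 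Conjugating by $V$ then gives the reverse inclusion. Alternatively, equality follows from finite-dimensionality, since conjugation by $V$ is injective on $P\mathcal{A}$.
\end{enumerate}

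I do not expect a real obstacle. The only point needing care is bookkeeping: the normalizer of $P\mathcal{B}$ must be taken inside the unitary group of the corner algebra $P\mathcal{A}$, whose unit is $P$, and not inside $\mathcal{U}(\mathcal{A})$. Centrality of $P$ is exactly what makes $PU$ a unitary there and lets $P$ pass through the conjugation.
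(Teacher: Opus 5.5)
Your proposal is correct and follows the same route as the paper: establish $P\,\mathcal{N}_{\mathcal{A}}(\mathcal{B})\subseteq\mathcal{N}_{P\mathcal{A}}(P\mathcal{B})$, then multiply a spanning expression $Y=\sum_k c_kU_k$ by $P$. You also verify explicitly that $PU$ is a unitary of the corner $P\mathcal{A}$ and that conjugation by it maps $P\mathcal{B}$ onto itself, which the paper's one-line proof leaves implicit.
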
 
\begin{prf}
    For $U\in \mathcal{N}_{\mathcal{A}}(\mathcal{B})$, we have $PU\in \mathcal{N}_{P\mathcal{A}}(P\mathcal{B})$, which implies $P\mathcal{N}_{\mathcal{A}}(\mathcal{B}) \subseteq \mathcal{N}_{P\mathcal{A}}(P\mathcal{B})$. So we have
    \begin{equation*}
        P\mathcal{A}=\text{span} P\mathcal{N}_{\mathcal{A}}(\mathcal{B})\subseteq \text{span} \mathcal{N}_{P\mathcal{A}}(P\mathcal{B})
    \end{equation*} Thus, $P\mathcal{B}\subseteq P\mathcal{A}$ is regular. \qed
\end{prf}

\noindent
\textbf{Notation:} Consider the inclusion $\mathbb{C}^r\subset \oplus_{i=0}^{s-1}M_{n_i}$. We assume that its inclusion matrix has constant row structure, meaning that every nonzero entry in the $k$-th row is equal to $a_k$ and $Y_k$ denote the support of  $k$-th row. 

\iffalse 
Since the inclusion is unital, we have 
\[  \sum_{l\in Y_i}P_iQ_l=\mathbf{1}_{n_i}, \qquad 0\leq i\leq s-1.
\] where $\{Q_j\}_{j=0}^{r-1}$  and $\{P_i\}_{i=0}^{s-1}$ be minimal central projections of $\mathbb{C}^r$ and $\oplus_{i=0}^{s-1}M_{n_i}$ respectively.
Define \[
H_j^k:=\operatorname{Ran}(P_kQ_j)\subset\mathbb C^{n_i} \qquad  0\leq k\leq s-1 \;\; \text{and}\;\; j\in Y_k.
\]
Then we obtain a orthogonal decomposition
\[
\mathbb{C}^{n_i} \;=\; \bigoplus_{l\in Y_i}^{r-1} H_l^i,
\qquad
H_j^k:=\operatorname{Ran}(P_kQ_j)\subset\mathbb C^{n_i}
\]

 With respect to this decomposition, we identify $M_{n_i}(\mathbb{C})$ with an $r_i\times r_i$ block matrix algebra with blocks in $M_{a_i}(\mathbb{C})$.
Accordingly, the matrix algebra decomposes as 
\[
M_{n_i}(\mathbb{C}) = \bigoplus_{l,k\in Y_i}Q_l M_{n_i}(\mathbb{C}) Q_k,
\]
For $A_i\in M_{n_i}(\mathbb{C})$, we write $A_i^{(l,k)}:=Q_lA_iQ_k$ so that $A_i= \bigoplus_{l,k\in Y_i}Q_lA_iQ_k$ where each block $A_i^{(l,k)}\in M_{a_i}(\mathbb C)$
\fi

Since the inclusion $\mathcal B \subset \mathcal A$ is unital, we have
\[
\sum_{l \in Y_k} P_k Q_l = \mathbf 1_{n_k},
\qquad 0 \le k \le s-1,
\]
where $\{Q_j\}_{j=0}^{r-1}$ and $\{P_k\}_{k=0}^{s-1}$ denote the minimal central
projections of $\mathbb C^r$ and $\bigoplus_{k=0}^{s-1} M_{n_k}$, respectively.

For $k \in \{0,1,\ldots,s-1\}$ and $j \in Y_k$, define
\[
H_j^k := \operatorname{Ran}(P_k Q_j) \subset \mathbb C^{n_k}.
\]
Then
\[
\mathbb C^{n_k} = \bigoplus_{j \in Y_k} H_j^k
\]
is an orthogonal decomposition. With respect to this decomposition, the matrix
algebra $M_{n_k}(\mathbb C)$ is naturally identified with an
$|Y_k| \times |Y_k|$ block matrix algebra whose entries belong to
$M_{a_k}(\mathbb C)$.

More precisely, we obtain the decomposition
\[
M_{n_k}(\mathbb C)
=
\bigoplus_{l,j \in Y_k} Q_l \, M_{n_k}(\mathbb C) \, Q_j .
\]
For $A_k \in M_{n_k}(\mathbb C)$, we write
\[
A_k^{(l,j)} := Q_l A_k Q_j \in M_{a_k}(\mathbb C),
\]
so that
\[
A_k = \sum_{l,j \in Y_k} A_k^{(l,j)} .
\]
\color{black}

\medskip

\noindent We illustrate the above discussion with the following example.

\begin{xmpl}
Let $\mathcal B=\mathbb C^4 \subset \mathcal A := M_9(\mathbb C)\oplus M_4(\mathbb C)$
be an inclusion with inclusion matrix
\[
A=
\begin{bmatrix}
3 & 3 & 0 & 3\\
0 & 0 & 2 & 2
\end{bmatrix}.
\]
Let $\{P_0,P_1\}$ denote the minimal central projections of $\mathcal A$
corresponding to the summands $M_9(\mathbb C)$ and $M_4(\mathbb C)$,
and let $\{Q_j\}_{j=0}^3$ be the minimal central projections of $\mathcal B$.

For the first row, the nonzero entries occur at $j=0,1,3$, each equal to $3$.
Hence
\[
Y_0=\{0,1,3\}, \qquad a_0=3,
\]
and unitality gives
\[
P_0 Q_0 + P_0 Q_1 + P_0 Q_3 = P_0 = \mathbf 1_{9}.
\]
Accordingly, we obtain an orthogonal decomposition
\[
\mathbb C^{9}
=
H_0^0 \oplus H_1^0 \oplus H_3^0,
\qquad
H_j^0 := \operatorname{Ran}(P_0 Q_j),
\]
with $\dim H_j^0 = 3$ for $j\in Y_0$.

For the second row, the nonzero entries occur at $j=2,3$ each equal to $2$.
Thus
\[
Y_1=\{2,3\}, \qquad a_1=2,
\]
and
\[
P_1 Q_2 + P_1 Q_3 = P_1 = \mathbf 1_{4},
\]
yielding the decomposition
\[
\mathbb C^{4}
=
H_2^1 \oplus H_3^1,
\qquad
H_j^1 := \operatorname{Ran}(P_1 Q_j),
\]
with $\dim H_j^1 = 2$ for $j\in Y_1$.

Combining both summands, we obtain
\[
\mathbb C^{9}\oplus \mathbb C^{4}
=
(\mathbb C^3 \oplus \mathbb C^3 \oplus \mathbb C^3)
\;\oplus\;
(\mathbb C^2 \oplus \mathbb C^2).
\]

With respect to these decompositions, any element
$A=A_0\oplus A_1 \in M_9(\mathbb C)\oplus M_4(\mathbb C)$
admits the block-matrix form
\[
A
=
\begin{pmatrix}
A_0^{(0,0)} & A_0^{(0,1)} & A_0^{(0,3)}\\
A_0^{(1,0)} & A_0^{(1,1)} & A_0^{(1,3)}\\
A_0^{(3,0)} & A_0^{(3,1)} & A_0^{(3,3)}
\end{pmatrix}
\;\oplus\;
\begin{pmatrix}
A_1^{(2,2)} & A_1^{(2,3)}\\
A_1^{(3,2)} & A_1^{(3,3)}
\end{pmatrix},
\]
where, for $i=0,1$ and $l,k\in Y_i$,
\[
A_i^{(l,k)} := Q_l A_i Q_k \in
\begin{cases}
M_3(\mathbb C), & i=0,\\
M_2(\mathbb C), & i=1.
\end{cases}
\]

This example shows explicitly how the minimal central projections
$P_k$ and $Q_j$ determine the block structure of each simple summand of
$\mathcal A$ via the row supports $Y_k$ of the inclusion matrix.
\end{xmpl}

\color{black}

\iffalse

\textbf{Notation:} 
We fix integers $r,t \ge 1$ and write $n = rt$ and identify $M_n(\mathbb{C})$ with an $r \times r$ block matrix algebra with
blocks in $M_t(\mathbb{C})$, corresponding to the orthogonal decomposition
\[
\mathbb{C}^n \;=\; \bigoplus_{k=0}^{r-1} E_k,
\qquad
E_k := \operatorname{span}\{ e_{kt+1}, \ldots, e_{(k+1)t} \}.
\]
Let $Q_0,\ldots,Q_{r-1}$ denote the orthogonal projections onto the subspaces
$E_0,\ldots,E_{r-1}$, respectively. For $A \in M_n(\mathbb{C})$ and
$k,l \in \{0,1,\ldots,r-1\}$, we denote by $A^{(l,k)}$ the $(l,k)$-th block of
$A$, defined by
\[
A^{(l,k)} := Q_l A Q_k \in M_t(\mathbb{C}).
\]

To justify this identification, observe that with respect to the standard
basis of $\mathbb{C}^n$, the projection $Q_k$ is the diagonal matrix selecting
the basis vectors $e_{kt+1},\ldots,e_{(k+1)t}$. Consequently, the operator
$Q_l A Q_k$ describes the action of $A$ from $E_k$ into $E_l$, and its matrix
entries are precisely the entries $(A)_{\alpha\beta}$ with
\[
lt+1 \le \alpha \le (l+1)t,
\qquad
kt+1 \le \beta \le (k+1)t.
\]

\fi

%For any $A \in M_n$ and $k,l\in\{0,1,\cdots,r-1\}$, we denote by $A^{(l,k)}$  the $(l,k)$-th block of $A$ . Explicitly, the $(l,k)$th block consists of the matrix entries $(A)_{\alpha \beta}$ with $lt +1\leq \alpha \leq (l+1)t$ and $kt+1\leq \beta \leq (k+1)t$.\\

\noindent In the following we provide the structure of $U\in \mathcal{N}_{(\mathcal{A})}(\mathbb{C}^r)$ with a specific condition on inclusion matrix.

\begin{ppsn}\label{ppsn:block_structure}
Let $\mathcal{B} = \mathbb{C}^r \subset \oplus_{i=0}^{s-1}M_{n_i}(\mathbb{C})$ be an inclusion whose
inclusion matrix having the property that, in each row, all the non-zero entries are equal. Let this common non-zero entry in the $k$-th row be $a_k$. Then by dimension counting, we have
$n_k = r_ka_k$, where $r_k=|Y_k|$. Identify $M_{n_k}(\mathbb{C})$ with an $r_k \times r_k$ block matrix algebra
with blocks in $M_{a_k}(\mathbb{C})$.

Then for every unitary $U \in \mathcal{N}_{\mathcal{A}}(\mathcal{B})$ there exists a \emph{unique}
bijective map $\sigma_i$ on $Y_i$, for each $i\in \{0,1,\cdots, s-1\}$ such that
\[
U_iP_ i Q_k U_i^* =P_i Q_{\sigma_i(k)} \qquad \text{for all } k \in Y_i.
\]
Equivalently,
\[
U_i^{(l,k)} \neq 0 \quad \Longleftrightarrow \quad l = \sigma_i(k).
\] where $U_i^{(l,k)}:=P_iQ_lU_iQ_k \in M_{a_i}(\mathbb{C})$ for $l,k\in Y_i$.
Moreover, whenever $U_i^{(l,k)} \neq 0$, one has $U_i^{(l,k)} \in \mathcal{U}(M_{a_i}(\mathbb{C}))$.
In particular, every $U_i$ is a block permutation matrix with unitary
blocks.
\end{ppsn}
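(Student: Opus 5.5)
The plan is to pass from the global automorphism $\mathrm{Ad}_U$ of $\mathcal B=\mathbb C^r$ to a permutation of the minimal projections $Q_0,\ldots,Q_{r-1}$, and then cut down by the central projections $P_i$ of $\mathcal A$.

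\emph{Step 1: a global permutation.} Let $U\in\mathcal N_{\mathcal A}(\mathcal B)$. Then $\mathrm{Ad}_U$ restricts to a $*$-automorphism of $\mathbb C^r$. Any $*$-automorphism maps minimal projections to minimal projections. The minimal projections of $\mathbb C^r$ are exactly $Q_0,\ldots,Q_{r-1}$. Hence, exactly as in the proof of Proposition \ref{lmma1}, there is a permutation $\pi\in S_r$ with
\[
UQ_kU^*=Q_{\pi(k)}, \qquad 0\le k\le r-1 .
\]
This step is the only place where the normalizing hypothesis is used.

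\emph{Step 2: localizing to each summand.} Write $U=\oplus_i U_i$ with $U_i=P_iU$. Each $U_i$ is a unitary in $M_{n_i}(\mathbb C)$, and $P_i$ is central, so it commutes with $U$ and with every $Q_k$. Multiplying the relation of Step 1 by $P_i$ gives
\[
U_iP_iQ_kU_i^*=P_iQ_{\pi(k)} .
\]
Now take $k\in Y_i$, so that $P_iQ_k\neq0$ (its rank is $a_i$). Conjugation by the unitary $U_i$ preserves nonzero-ness, so $P_iQ_{\pi(k)}\neq0$, i.e. $\pi(k)\in Y_i$. Thus $\pi$ maps $Y_i$ injectively into the finite set $Y_i$, and its restriction $\sigma_i:=\pi|_{Y_i}$ is a bijection of $Y_i$.

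For uniqueness, note that the projections $P_iQ_l$, $l\in Y_i$, are nonzero and pairwise orthogonal, hence pairwise distinct. Therefore the index $\sigma_i(k)$ is determined by the projection $U_iP_iQ_kU_i^*$, and no other bijection of $Y_i$ can satisfy the stated relation.

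\emph{Step 3: the block form.} From Step 2 we get $U_iP_iQ_k=P_iQ_{\sigma_i(k)}U_i$. Multiplying on the left by $P_iQ_l$ and on the right by $Q_k$ gives
\[
U_i^{(l,k)}=P_iQ_l\,Q_{\sigma_i(k)}\,U_iQ_k ,
\]
which vanishes whenever $l\neq\sigma_i(k)$.

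For $l=\sigma_i(k)$, the relation $U_i(H_k^i)=H^i_{\sigma_i(k)}$ shows that $U_i$ restricts to an isometry of $H_k^i$ onto $H^i_{\sigma_i(k)}$. Both spaces have dimension $a_i$. So, under the identification of each $H^i_j$ with $\mathbb C^{a_i}$ that underlies the block picture, the block $U_i^{(\sigma_i(k),k)}$ is a unitary in $M_{a_i}(\mathbb C)$. In particular it is nonzero, which yields the claimed equivalence $U_i^{(l,k)}\neq0\iff l=\sigma_i(k)$. Consequently each $U_i$ is a block permutation matrix with unitary blocks.

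\emph{Main obstacle.} I expect the delicate point to be Step 2: checking that the single global permutation $\pi$ really restricts to each row support $Y_i$. This hinges on $P_iQ_k\neq0$ exactly when $k\in Y_i$, together with the invariance of nonzero-ness under unitary conjugation. The remaining steps are routine block bookkeeping.
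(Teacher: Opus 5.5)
Your proof is correct and follows essentially the paper's argument: the normalizer permutes the minimal projections, and the intertwining relation $U_iP_iQ_k=P_iQ_{\sigma_i(k)}U_i$ kills every block off the permutation pattern. The differences are small and, if anything, tidier. You obtain every $\sigma_i$ as the restriction of one global permutation $\pi$ with $\pi(Y_i)=Y_i$, whereas the paper works in $P_i\mathcal{B}\subset P_i\mathcal{A}$ directly and only uses this global compatibility later, in the proof of Theorem~\ref{mainthm}. You also get the unitarity and the non-vanishing of $U_i^{(\sigma_i(k),k)}$ from $U_i(H_k^i)=H^i_{\sigma_i(k)}$, rather than from the block identities of $U_iU_i^*=U_i^*U_i=\mathbf{1}_{n_i}$.
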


\begin{xmpl}
    Let $\mathcal B=\mathbb C^4\subset M_9(\mathbb C)\oplus M_4(\mathbb C)$ with inclusion matrix $\begin{bmatrix}
    3&3&0&3\\
    0&0&2&2
\end{bmatrix}$.
Let $U\in \mathcal{N}_{(\mathcal{A})}(\mathbb{C}^r)$
Suppose $U_0$ corresponds to the map $\sigma_0$ where $\sigma_0(0)=1$, $\sigma_0(1)=3$ and $\sigma_0(3)=0$. Then
Proposition~\ref{ppsn:block_structure} implies that
\[
U_0^{(i,j)} \neq 0 \quad \Longleftrightarrow \quad i = \sigma_0(j),\;\; \text{for}\;\; i,j\in Y_0
\]
so that $U_0$ has exactly three nonzero blocks:
\[
U_0^{(0,3)}, \quad U_0^{(1,0)}, \quad U_0^{(3,1)} \in \mathcal{U}(3),
\]
and all other blocks are zero. Explicitly, the matrix looks like
\[
U_0 =
\begin{pmatrix}
0 & 0 & U_0^{(0,3)} \\
U_0^{(1,0)} & 0 & 0 \\
0 & U_0^{(3,1)} & 0
\end{pmatrix}.
\]
In this case, the associated permutation is uniquely determined by
$U_0Q_jU_0^* = Q_{\sigma_0(j)}$ for $j\in Y_0$.
 Similarly, if $U_1$ corresponds to the map $\sigma_1$ such that $\sigma_1(2)=3$ and $\sigma_1(3)=2$.
Then, \[
U_1 =
\begin{pmatrix}
0 & U_1^{(2,3)} \\
U_1^{(3,2)} & 0 \\
\end{pmatrix},
\]
where \[
U_1^{(2,3)}, \quad U_1^{(3,2)}\in \mathcal{U}(2).
\]

\end{xmpl}

\noindent \textbf{Proof of Proposition \ref{ppsn:block_structure}:}
Let $U \in \mathcal{N}_{\mathcal{A}}(\mathcal{B})$ be a unitary normalizer then $U_i:=P_iU\in \mathcal{N}_{P_i\mathcal{A}}(P_i\mathcal{B}) $.  

Since each $P_iQ_j$ for $j\in Y_i$ is a minimal projection in the commutative algebra $P_i\mathcal{B}$, conjugation by $U_i$ maps $P_iQ_j$ to another minimal projection $P_iQ_k$ for some $k\in Y_i$  in $P_i\mathcal{B}$.  
Hence, for each $j\in Y_i$, there exists a unique bijective map $\sigma_i$ on $Y_i$ such that
\[
U_iP_i Q_j U_i^* =P_iQ_{\sigma_i(j)}.
\]
It follows that $\sigma_i$ defines a bijective map on $Y_i$ because $U_i$ is unitary in $M_{n_i}$ and thus bijective on projections corresponding to $Y_i$. This establishes the existence and uniqueness of the maps $\sigma_i$ associated to $U$.

Now write $U_i$ in the block form with respect to the decomposition
\[
M_{n_i}(\mathbb{C}) = \bigoplus_{u,v\in Y_i}Q_u M_{n_i}(\mathbb{C}) Q_v,
\]
and recall for $l,k\in Y_i$
\[
U_i^{(l,k)} := Q_l U_iQ_k \in M_{a_i}(\mathbb{C}). 
\]
 Then clearly
\[
U_i Q_k = \sum_{l\in Y_i} U_i^{(l,k)}, \qquad Q_l U_i = \sum_{k\in Y_i}U_i^{(l,k)}.
\]

Since $U_iP_i Q_k U_i^* = P_iQ_{\sigma_i(k)}$, we can rewrite this as
\[
U_i Q_k = Q_{\sigma_i(k)} U_i = \sum_{l\in Y_i} Q_{\sigma_i(k)} U_i Q_l = \sum_{l\in Y_i} U^{(\sigma_i(k),l)}.
\]
\iffalse
In particular, every element of $M_{n_i}(\mathbb{C})$ admits a unique block
decomposition with respect to these corners.
Writing
\[
U_i Q_k = \sum_{l\in Y_i} Q_l U_i Q_k
\quad \text{and} \quad
Q_{\sigma_i(k)} U_i = \sum_{l\in Y_i} Q_{\sigma_i(k)} U_i Q_l,
\]
and 
\fi

Using the equality $U_iQ_k = Q_{\sigma_i(k)}U_i$, the uniqueness of the block
decomposition forces
\[
Q_l U_i Q_k = 0 \quad \text{for all } l \neq \sigma_i(k).
\]

In other words, all blocks $U_i^{(l,k)}$ vanish unless $l = \sigma_i(k)$:
\[
U_i^{(l,k)} = 0 \quad \text{for all } l \neq \sigma_i(k).
\]
Thus, for each $k\in Y_i$, exactly one block row contains nonzero entries, corresponding to the map $\sigma_i$. Equivalently,
\[
U_i^{(l,k)} \neq 0 \quad \Longleftrightarrow \quad l = \sigma_i(k).
\]

Conversely, suppose $U_i$ is a unitary whose block decomposition satisfies
\[
U_i^{(l,k)} \neq 0 \quad \text{iff} \quad l = \sigma_i(k)
\]
for some bijective map $\sigma_i$ on $Y_i$, and each nonzero $U_i^{(l,k)}$ is itself unitary in $M_{a_i}(\mathbb{C})$. Then for each $k\in Y_i$,
\[
U_i Q_k = U_i^{(\sigma_i(k),k)}, \quad Q_{\sigma_i(k)} U_i = U^{(\sigma_i(k),k)},
\]
so $U_i Q_k = Q_{\sigma_i(k)} U_i$, which implies
\[
U_i Q_k U_i^* = Q_{\sigma_i(k)}.
\]
Hence any unitary with this block structure normalizes $P_i\mathcal{B}$, showing the converse.

Finally, we show that each nonzero block $U_i^{(l,k)}$ is unitary.
By the block-support property established above, for each fixed $l$ there exists
a unique index $k$ such that $U_i^{(l,k)} \neq 0$, and similarly for each fixed $k$
there exists a unique $l$ with this property.

Since $U_i$ is unitary in $M_{n_i}$, we have $U_iU_i^*=\mathbf{1}_{n_i}$.  For $l\in Y_i$, this yields
\[
(U_iU_i^*)^{(l,l)}=\sum_{j\in Y_i} U_i^{(l,j)}(U_i^{(l,j)})^*=\mathbf{1}_{a_i}.
\]
All summands vanish except for $j=k$, and hence
\[
U_i^{(l,k)}(U_i^{(l,k)})^*=\mathbf{1}_{a_i}.
\]

Similarly, from $U_i^*U_i=\mathbf{1}_{n_i}$ and taking $k\in Y_i$ we obtain
\[
(U_i^*U_i)^{(k,k)}=\sum_{l\in Y_k}(U_i^{(l,k)})^*U_i^{(l,k)}=\mathbf{1}_{a_i},
\]
which again reduces to
\[
(U_i^{(l,k)})^*U_i^{(l,k)}=\mathbf{1}_{a_i}.
\]

Therefore, $U_i^{(l,k)}$ is a unitary element of $M_{a_i}(\mathbb{C})$.

Combining all observations, we conclude that every $P_iU$ for $U\in \mathcal{N}_{\mathcal{A}}(\mathbb{C}^r)$  is a block-permutation matrix with unitary blocks and the proof is complete.\qed

\color{black}

We recall the standard notion of a homogeneous finite-dimensional $C^*$-algebra.
\begin{dfn}
A finite-dimensional von Neumann algebra $\mathcal A$ is called \emph{homogeneous} if there exist integers $r,n \ge 1$ such that
\[
\mathcal A \cong \bigoplus_{i=1}^{r} M_n(\mathbb C).
\]
Equivalently, $\mathcal A$ is a finite direct sum of matrix algebras, all of the same dimension.
\end{dfn}

\iffalse
For the remainder of this section, we consider a finite-dimensional inclusions $\mathcal B \subset \mathcal A$ be a finite-dimensional inclusion as in
\eqref{inclusion}, with inclusion matrix $A_{\mathcal B}^{\mathcal A}=[a_{ij}]$.
The inclusion $i:\mathcal B\hookrightarrow \mathcal A$ associated with
$A_{\mathcal{B}}^{\mathcal{A}}=[a_{ij}]$ is defined componentwise by
\begin{equation}
i(X)_i
=
\bigoplus_{j=0}^{r-1} \left(X_j\otimes \mathbf 1_{a_{ij}}\right),
\qquad
X=\bigoplus_{j=0}^{r-1} X_j\in\mathcal B.
\end{equation}
\fi
\begin{ppsn}
Let $\mathcal B \subset \mathcal A$ be a finite-dimensional inclusion as in
\eqref{inclusion}, with inclusion matrix $A_{\mathcal B}^{\mathcal A} = [a_{ij}]_{0\le i\le (s-1),\;0\le j\le (r-1)}.$
Then the inclusion matrix of the restricted inclusion $\mathcal Z(\mathcal B) \subset \mathcal A$
is given by
\[
A_{\mathcal Z(\mathcal B)}^{\mathcal A}
=
\bigl[\, m_j\, a_{ij} \,\bigr]_{0\le i\le (s-1),\;0\le j\le (r-1)}.
\]
\end{ppsn}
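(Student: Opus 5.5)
The plan is to compute the inclusion matrix of $\mathcal Z(\mathcal B)\subset\mathcal A$ directly from the explicit form \eqref{inclusion}, and then read off the ranks of the minimal projections of $\mathcal Z(\mathcal B)$ inside each summand $M_{n_i}$.

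First I identify $\mathcal Z(\mathcal B)$. Since $\mathcal B=\oplus_{j=0}^{r-1}M_{m_j}$, its centre is $\mathcal Z(\mathcal B)=\oplus_{j}\mathbb C\mathbf 1_{m_j}\cong\mathbb C^r$. The minimal projections of $\mathcal Z(\mathcal B)$ are exactly the minimal central projections $Q_0,\dots,Q_{r-1}$ of $\mathcal B$. The simple summands of $\mathcal Z(\mathcal B)$ are the copies $\mathbb C=M_1$, so the dimension vector of $\mathcal Z(\mathcal B)$ is $(1,\dots,1)^t$.

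Next I compose with the embedding \eqref{inclusion}. An element $\oplus_j\lambda_j\mathbf 1_{m_j}$ maps to
\[
\oplus_{i}\mathrm{bl\text{-}diag}\bigl(\lambda_0\mathbf 1_{m_0}\otimes\mathbf 1_{a_{i0}},\dots,\lambda_{r-1}\mathbf 1_{m_{r-1}}\otimes\mathbf 1_{a_{i,r-1}}\bigr).
\]
In the $i$-th summand, the scalar $\lambda_j$ therefore appears on a diagonal block of size $m_ja_{ij}$. Equivalently, $\operatorname{rank}(P_iQ_j)=m_ja_{ij}$.

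Since the inclusion matrix entry records the multiplicity of the summand $M_1$ of $\mathcal Z(\mathcal B)$ inside $M_{n_i}$, and this multiplicity equals the rank of $P_iQ_j$, the $(i,j)$ entry is $m_ja_{ij}$. As a consistency check, dimension counting gives $\sum_j m_ja_{ij}\cdot 1=n_i$, which agrees with $n'=Am'$.

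The only delicate point is justifying that "multiplicity" equals $\operatorname{rank}(P_iQ_j)$ for a $\mathbb C$-summand. I would settle it by noting that, up to unitary conjugation in $M_{n_i}$, the restricted embedding has precisely the standard form \eqref{inclusion} with blocks $\lambda_j\otimes\mathbf 1_{m_ja_{ij}}$. The block $\lambda_j\mathbf 1_{m_j}\otimes\mathbf 1_{a_{ij}}$ is literally $\lambda_j\mathbf 1_{m_ja_{ij}}$, so no reordering is even needed.
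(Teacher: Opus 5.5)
Your proposal is correct and follows essentially the same route as the paper, which also composes the embedding as $\bigoplus_j \alpha_j\mathbf 1_{m_j}\mapsto\bigoplus_i \mathrm{bl\text{-}diag}(\alpha_0\mathbf 1_{m_0a_{i0}},\dots,\alpha_{r-1}\mathbf 1_{m_{r-1}a_{i(r-1)}})$ and reads off the multiplicities $m_ja_{ij}$ directly. The paper omits your extra remarks, namely the rank identity $\operatorname{rank}(P_iQ_j)=m_ja_{ij}$ and the check $\sum_j m_ja_{ij}=n_i$; both are correct.
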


\begin{prf}
    The centre of $\mathcal{B}$, denoted by $
\mathcal{Z(B)}$, equals $\bigoplus_{j=0}^{r-1}\mathbb{C}\,\mathbf{1}_{m_j}\cong \mathbb{C}^r .
$ For the inclusion $\mathcal{Z(B)}\subset \mathcal{A}$, the embedding is given by  \begin{align*}
    i:\ \bigoplus_{j=0}^{r-1}\mathbb{C}\,\mathbf{1}_{m_j}\ &\longrightarrow\ \bigoplus_{i=0}^{s-1}M_{n_i}\\
    \bigoplus_{j=0}^{r-1}\alpha_j\,\mathbf{1}_{m_j}\ &\longmapsto\ 
\bigoplus_{i=0}^{s-1}\; \text{bl-diag}(\alpha_0\,\mathbf{1}_{m_0 a_{i0}},\alpha_1\,\mathbf{1}_{m_1 a_{i1}},\cdots,\alpha_{r-1}\,\mathbf{1}_{m_{r-1} a_{i(r-1)}}).
\end{align*}

Thus, the inclusion matrix for $\mathcal{Z(B)}\subset \mathcal{A}$,
corresponding to the inclusion $\mathcal{B}\subset \mathcal{A}$, is  
$$A_{\mathcal{Z(B)}}^{\mathcal{A}}=[\,m_j a_{ij}\,].$$ \qed

\end{prf}

\begin{ppsn} \label{ppsn: regular homogeneous}
    If the inclusion $\mathcal{B}\subseteq M_n(\mathbb{C})$ is regular, then $\mathcal{B}$ is homogeneous and the entries of the inclusion matrix are equal. 
\end{ppsn}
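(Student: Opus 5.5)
Let $\mathcal B=\bigoplus_{j=0}^{r-1}M_{m_j}\subseteq M_n(\mathbb C)$ be regular, with inclusion matrix $\begin{bmatrix}a_0&\cdots&a_{r-1}\end{bmatrix}$, so that $n=\sum_j a_jm_j$. The plan is to pass to the centre $\mathcal Z(\mathcal B)\cong\mathbb C^r$ and apply Proposition~\ref{lmma1}.

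\textbf{Step 1: normalizers descend to the centre.} If $U\in\mathcal N_{M_n}(\mathcal B)$, then $\operatorname{Ad}_U$ restricts to a $*$-automorphism of $\mathcal B$. Any $*$-automorphism maps the centre onto the centre, so $U\mathcal Z(\mathcal B)U^*=\mathcal Z(\mathcal B)$. Hence
\[
\mathcal N_{M_n}(\mathcal B)\subseteq\mathcal N_{M_n}(\mathcal Z(\mathcal B)),
\]
and therefore $M_n(\mathbb C)=\operatorname{span}\mathcal N_{M_n}(\mathcal B)\subseteq\operatorname{span}\mathcal N_{M_n}(\mathcal Z(\mathcal B))$. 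So the inclusion $\mathcal Z(\mathcal B)\cong\mathbb C^r\subseteq M_n(\mathbb C)$ is regular.

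\textbf{Step 2: read off equal ranks.} By the preceding proposition on restricted inclusions, the inclusion matrix of $\mathcal Z(\mathcal B)\subseteq M_n$ is $\begin{bmatrix}m_0a_0&\cdots&m_{r-1}a_{r-1}\end{bmatrix}$. Proposition~\ref{lmma1} then gives $m_ja_j=c$ for all $j$. This says exactly that every central projection $Q_j$ has the same rank $c$.

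\textbf{Step 3: separate $m_j$ from $a_j$ (the main obstacle).} Step 2 alone does not force $m_j$ and $a_j$ to be equal separately, so finer invariance is needed. Take $U\in\mathcal N(\mathcal B)$. Then $\operatorname{Ad}_U$ permutes the simple summands of $\mathcal B$, carrying $M_{m_j}$ isomorphically onto $M_{m_{\sigma(j)}}$. Hence $m_{\sigma(j)}=m_j$, and since $UQ_jU^*=Q_{\sigma(j)}$ with equal ranks, also $a_{\sigma(j)}=a_j$.

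Now mimic the proof of Proposition~\ref{lmma1}. Suppose the $m_j$ are not all equal, and let $S$ be the set of indices with $m_j$ equal to the minimal value. Every normalizer then fixes the projection $Q_S=\sum_{j\in S}Q_j$. Consequently the span of the normalizer lies in $Q_SM_nQ_S+Q_S^\perp M_nQ_S^\perp$. Since $Q_S\neq0$ and $Q_S^\perp\neq0$, a rank-one operator $|e\rangle\langle f|$ with $e\in\operatorname{Ran}Q_S$ and $f\in\operatorname{Ran}Q_S^\perp$ is not in this span, which contradicts regularity.

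So all $m_j$ are equal to some $m$; in other words, $\mathcal B\cong\bigoplus_{j=1}^{r}M_m$ is homogeneous. Combining this with $m_ja_j=c$ gives $a_j=c/m$ for every $j$, so all entries of the inclusion matrix are equal.
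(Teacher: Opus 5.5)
Your proof is correct and follows the paper's route: pass to the centre, use Proposition~\ref{lmma1} to get $m_ja_j$ constant, then show that normalizers cannot link summands of different sizes, so a corner of $M_n(\mathbb C)$ is missed by their span. The only difference is in Step 3. You get $m_{\sigma(j)}=m_j$ directly from $\operatorname{Ad}_U|_{\mathcal B}$ permuting the simple summands, then reuse the invariant-projection argument of Proposition~\ref{lmma1}. The paper instead invokes the block-permutation structure of Proposition~\ref{ppsn:block_structure} and a dimension count on $U^{(i,j)}(M_{m_j}\otimes\mathbf 1_{a_j})U^{*(j,i)}\subseteq M_{m_i}\otimes\mathbf 1_{a_i}$. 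Your version is slightly more economical, since homogeneity no longer depends on Step 2.
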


\begin{prf}
We proceed in several steps.

\medskip
\noindent
\textbf{Step 1: Normalizers preserve the center.}  
We claim that
\[
\mathcal N_{M_n}(\mathcal B) \subseteq \mathcal N_{M_n}(\mathcal Z(\mathcal B)).
\]  
Indeed, if $U \in \mathcal N_{M_n}(\mathcal B)$, then $\mathrm{Ad}_U \in \mathrm{Aut}(\mathcal B)$.  
Since any automorphism preserves the center, we have $\mathrm{Ad}_U \in \mathrm{Aut}(\mathcal Z(\mathcal B))$, hence $U \in \mathcal N_{M_n}(\mathcal Z(\mathcal B))$.

As a consequence, if $\mathcal B \subseteq M_n(\mathbb C)$ is regular, then the inclusion $\mathcal Z(\mathcal B) \subseteq M_n(\mathbb C)$ is also regular.

\medskip
\noindent
\textbf{Step 2: Regularity of the center forces equality of scalar multiplicities.}  
Let $\mathcal Z(\mathcal B) \cong \bigoplus_{j=0}^{r-1} \mathbb C \mathbf 1_{m_j}$, and  its inclusion matrix in $M_n(\mathbb C)$ is $[m_0 a_0 \;\; m_1 a_1 \;\; \cdots \;\; m_{r-1} a_{r-1}].$
By Proposition~\ref{lmma1}, regularity implies 
\[
m_0 a_0 = m_1 a_1 = \cdots = m_{r-1} a_{r-1}.
\]  
Hence, by Proposition~\ref{ppsn:block_structure}, each $U \in \mathcal N_{M_n}(\mathcal B)$ is a block permutation matrix with unitary blocks.

\medskip
\noindent
\textbf{Step 3: Comparison of block sizes.}  
We claim that if $m_i < m_j$, then for any $U \in \mathcal N_{M_n}(\mathcal B)$ the block $U^{(i,j)} = 0$.

Indeed, consider an element
\[
X = \operatorname{diag}(X_0 \otimes \mathbf 1_{a_0}, \dots, X_{r-1} \otimes \mathbf 1_{a_{r-1}}) \in \mathcal B,
\]
with $X_i \in M_{m_i}(\mathbb C)$.  
Suppose $U^{(i,j)} \neq 0$. Then the $(i,i)$-block of $UXU^*$ is
\begin{align*}
(UXU^*)^{(i,i)}
&= \sum_{k=0}^{r-1} (UX)^{(i,k)} (U^*)^{(k,i)} \\
&= (UX)^{(i,j)} (U^*)^{(j,i)} \\
&= U^{(i,j)} (X_j \otimes \mathbf 1_{a_j}) U^{*(j,i)} \in M_{m_i} \otimes \mathbf 1_{a_i}.
\end{align*}

Thus, we would have
\[
U^{(i,j)} (M_{m_j} \otimes \mathbf 1_{a_j}) U^{*(j,i)} \subseteq M_{m_i} \otimes \mathbf 1_{a_i}.
\]  
Comparing dimensions, this requires $m_j \le m_i$, which contradicts $m_i < m_j$. Therefore $U^{(i,j)} = 0$.

However, if this were the case for some $i \neq j$, then every linear combination of normalizing unitaries would have zero in the corresponding $(i,j)^{th}$-block, contradicting the regularity of $\mathcal B \subseteq M_n(\mathbb C)$.

\medskip
\noindent
\textbf{Step 4: Homogeneity and equality of inclusion matrix entries.}  

From Step 2, we have $m_0 a_0 = m_1 a_1 = \cdots = m_{r-1} a_{r-1}$.  
Step 3 showed that if $m_i < m_j$, then the $(i,j)^{th}$-block of any normalizing unitary must vanish, contradicting regularity.  

By symmetry, the same argument rules out $m_j < m_i$.  
Hence, we conclude that
\[
m_0 = m_1 = \dots = m_{r-1}.
\]

Since the products $m_i a_i$ are equal, it follows that all $a_i$ are also equal.  
Consequently, $\mathcal B$ is homogeneous, and all entries of the inclusion matrix are equal. \qed
\end{prf}

\color{black}

%   \begin{rmrk}
%    Note that the converse of the above statement is also true i.e., if $B$ is homogeneous then $B=\oplus_{j=1}^{r-1}M_l(\mathbb{C})$, then from the definition of inclusion matrix we have $a_0rl=n$.
%  \begin{equation*}
%     M_n(\mathbb{C}) = M_k(\mathbb{C}) \otimes M_l(\mathbb{C}) \otimes M_{a_0}(\mathbb{C})
%  \end{equation*}

 %   \begin{equation*}
 %       B= \mathbb{C}^k\otimes M_l(\mathbb{C}) \otimes \mathbb{C}
%    \end{equation*}
% since each inclusion $\mathbb{C}^k\subset M_k(\mathbb{C})$, $\mathbb{C}\subset M_{a_0}(\mathbb{C})$  and $M_l(\mathbb{C})\subset M_l(\mathbb{C})$ are regular so is their tensor product.
%\end{rmrk}

\subsection{Regularity of Inclusions in the General Case}
We now extend the analysis of regular inclusions from the special case of matrix algebras to arbitrary finite-dimensional von Neumann algebras. Building on the combinatorial framework of normalizer matrices introduced in Subsection 3.1 and the structural results for matrix algebra inclusions in Subsection 3.2, we show that regularity in the general setting is entirely captured by the inclusion matrix and the dimensions of the simple summands. The characterization allows us to decompose any regular inclusion, up to isomorphism, into direct sums and tensor products of the canonical building blocks identified earlier, thereby providing a complete and unified description of regularity in finite dimensions.

\blmma\label{equal dimension}
    If the inclusion $\mathcal{B} \subset \mathcal{A}$ is regular, then for each row of the inclusion matrix  all the non-zero entries are equal. Moreover, for each index $j\in Y_i$, the summands $M_{m_j}$'s have the same dimension, where $i\in \{0,1,\cdots, s-1\}$.
\elmma
\begin{prf}
Fix $k\in \{0,1,\cdots,s-1\}$ and let the support of $k^{th}$ row be $Y_k=\{x_1,x_2,\cdots,x_{r_k}\}$. By equation \ref{inclusion} and  Lemma \ref{inducedreg}, we have \[
P_k\mathcal{B}=\text{bl-diag}(M_{m_{x_1}}\otimes \mathbf{1}_{a_{kx_{1}}},\cdots,M_{m_{x_{r_k}}\otimes \mathbf{1}_{a_{kx_{r_k}}}})\subset M_{n_k}=P_k\mathcal{A}
\]is regular. Equivalently, the inclusion
\[
\oplus_{j\in Y_i}M_{m_j}\subset M_{n_k}
\]
with inclusion matrix $[a_{kj}]_{j\in Y_i}$, is 
    regular. By proposition \ref{ppsn: regular homogeneous}, it follows that  $P_k\mathcal{B}$ is homogenoeus i.e., $M_{m_j}$'s have same dimension for all $j\in Y_k$ and the entries of the above inclusion matrix are non-zero and equal which implies that all the non-zero entries in each row of the inclusion matrix are equal. \qed
\end{prf}

\begin{xmpl}
    Consider the inclusion $\oplus_{j=0}^{3}M_{m_j}\subset \oplus_{i=0}^{1}M_{n_i}$
 with inclusion matrix $A=\begin{bmatrix}
     a_{00}&0&a_{02}&0\\
     0&a_{11}&0&a_{13}
 \end{bmatrix}$. Here we have $Y_0=\{0,2\}$ and $Y_1=\{1,3\}$.
Then by above proposition we have $m_0=m_2$ and $m_1=m_3$.

\end{xmpl}

%  Let $Y_i:=\{j\in\{0,1,\cdots,r-1\} : a_{ij}\neq 0\}$ and $X\subset \{0,1,\cdots ,s-1\}$ then the inclusion matrix for the inclusion $(\sum\limits_{i\in X}P_i)\mathcal{B}\subset (\sum\limits_{i\in X}P_i)\mathcal{A}$ is $[a_{ij}]_{i\in X,\;j\in \cup_{i\in X}Y_i}$, which we call as induced inclusion matrix.

\iffalse
In this case, let us represent elements of  $P_i\mathcal{A}=M_{n_i}$ as $r_i\times r_i$  block matrix, where $r_i=|Y_i|$ and each block has size $\frac{n_i}{r_i}\times\frac{n_i}{r_i}$. We label the blocks of  
$U_i\in M_{n_i}$ as  $U_i^{(k,l)}$ for $k,l\in Y_i$ preserving the natural order of indices.\\
As an illustration, consider the inclusion $$\oplus_{j=0}^{2}M_{m_j}\subset \oplus_{i=0}^1M_{n_i}$$ with inclusion matrix $A=\begin{bmatrix}
    a&0&a\\
    0&b&b
\end{bmatrix}$. 

In this case, $Y_0=\{0,2\}$ and $Y_1=\{1,2\}$. Accordingly, the 
block labeling of $T\in M_{n_0}$ is $$\begin{bmatrix}
T^{(0,0)}&T^{(0,2)}\\
T^{(2,0)}&T^{(2,2)}
\end{bmatrix}$$ where each block is of size $\frac{n_0}{2}\times \frac{n_0}{2}$     and the block labeling of $S\in M_{n_1}$ is $$\begin{bmatrix}
   S^{(1,1)}&S^{(1,2)}\\
S^{(2,1)}& S^{(2,2)} 
\end{bmatrix}$$ where each block is of size $\frac{n_1}{2}\times \frac{n_1}{2}$. \\

With this block notation, we have the following description, (similar to Proposition \ref{ppsn:block_structure}) for $(k,l)\in Y_i$ and $U\in \mathcal{N_A(B)}$. \begin{equation*}
   (P_iU= U_i)^{(l,k)}=\begin{cases}
        \neq0 & \text{iff} \:\; U_iP_iQ_kU_i^*=P_iQ_l,\\
        0& \text{Otherwise}.
    \end{cases}
\end{equation*} 
\fi

\begin{thm} \label{mainthm}
The inclusion $\mathcal{B} \subset \mathcal{A}$ is regular if and only if
\begin{enumerate}\itemsep0em
        \item The inclusion matrix is an \textit{normalizer matrix}.
        \item For each $i \in \{0,1,\ldots,s-1\}$, all summands $M_{m_j}$ with $j \in Y_i$ have the same dimension.
    \end{enumerate}

As a consequence, any regular inclusion is isomorphic to tensor product and direct sum of inclusions $M_k(\mathbb{C}) \subset M_k(\mathbb{C}),$  $\mathbb{C} \subset \oplus_{i=0}^{v-1} M_{l_i}(\mathbb{C})$ and $\mathbb{C}^t \subset M_t(\mathbb{C})$.
\end{thm}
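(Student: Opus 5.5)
The proof splits into necessity and sufficiency. Necessity rests on Lemma \ref{equal dimension} plus one new invariant-subspace argument for the partition property. Sufficiency reduces, via pseudo-equivalence, to a direct sum of blocks, each recognized as a tensor product of the three basic inclusions.

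\emph{Necessity.} Assume $\mathcal B\subset\mathcal A$ is regular.
\begin{enumerate}
\item Lemma \ref{equal dimension} gives directly that in each row all nonzero entries are equal, and that $m_j$ is constant on each $Y_i$. This is condition (2) and the first condition of Definition \ref{normalizer matrix}. Irredundancy is automatic: the inclusion is unital and every summand of $\mathcal B$ embeds somewhere.
\item It remains to show that two supports $Y_i,Y_k$ are either equal or disjoint. Let $U\in\mathcal N_{\mathcal A}(\mathcal B)$. Then $\mathrm{Ad}_U$ is an automorphism of $\mathcal B$, so it permutes the minimal central projections $Q_j$ of $\mathcal B$: there is a global permutation $\pi$ with $UQ_jU^*=Q_{\pi(j)}$.
\item Since $U\in\mathcal A$ commutes with the central projections $P_i$, we get $U(P_iQ_j)U^*=P_iQ_{\pi(j)}$. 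Because $P_iQ_j\neq0$ exactly when $j\in Y_i$, this forces $\pi(Y_i)=Y_i$ for every $i$.
\item Suppose $l\in Y_i\cap Y_k$ and, without loss of generality, $j\in Y_k\setminus Y_i$. Every such $\pi$ preserves $Y_i\cap Y_k$, so $\pi(l)\neq j$. Hence $P_kQ_jUQ_lP_k=0$ for every normalizing unitary $U$.
\item This vanishing passes to $\operatorname{span}\mathcal N_{\mathcal A}(\mathcal B)$. But $P_kQ_j\mathcal AQ_lP_k\neq0$, since both projections are nonzero in the factor $M_{n_k}$. This contradicts regularity, exactly as in the proof of Proposition \ref{lmma1}.
\end{enumerate}
So the supports partition as required, and $A$ is a normalizer matrix.

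\emph{Sufficiency.} Assume (1) and (2).
\begin{enumerate}
\item By Proposition \ref{ppsn1} and Proposition \ref{ppsn2}, we may replace the inclusion by an isomorphic one whose matrix is $\mathrm{bl\text{-}diag}(A_{11},\dots,A_{pp})$. Regularity is invariant under isomorphism of inclusions.
\item The inclusion then splits as a direct sum over the blocks. Since $\mathcal N_{\mathcal A_1}(\mathcal B_1)\oplus\mathcal N_{\mathcal A_2}(\mathcal B_2)\subseteq\mathcal N_{\mathcal A_1\oplus\mathcal A_2}(\mathcal B_1\oplus\mathcal B_2)$, a direct sum of regular inclusions is regular. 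So it suffices to treat a single block.
\item Fix a block with row set $R_k$ and column set $C_k$, where $|C_k|=t$. By (2), $m_j=m$ for all $j\in C_k$, and row $i\in R_k$ has constant entry $a_i$, so $n_i=t\,m\,a_i$.
\item The block inclusion $\bigoplus_{j\in C_k}M_m\subset\bigoplus_{i\in R_k}M_{tma_i}$ has the same inclusion matrix and dimension vectors as
\[
(\mathbb C^t\subset M_t)\otimes(M_m\subset M_m)\otimes\Bigl(\mathbb C\subset\bigoplus_{i\in R_k}M_{a_i}\Bigr).
\]
Indeed, the tensor product of the matrices $[1\ \cdots\ 1]$, $[1]$ and $(a_i)_{i\in R_k}^t$ is the $|R_k|\times t$ matrix whose row $i$ is constantly $a_i$. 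Since an inclusion is determined up to isomorphism by $(A,m',n')$, the two inclusions are isomorphic.
\item The three factors are regular, as noted in Section 2: their normalizers span. A tensor product of regular inclusions is regular, because the products $U_1\otimes U_2\otimes U_3$ lie in the normalizer and span $\mathcal A_1\otimes\mathcal A_2\otimes\mathcal A_3$.
\end{enumerate}
This proves regularity, and simultaneously the stated decomposition into direct sums and tensor products of the three basic inclusions.

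The main obstacle is the partition step in the necessity direction. The key point is that the permutation $\pi$ induced by a normalizer acts \emph{simultaneously} on all summands, because it comes from an automorphism of $\mathcal B$ itself and $U$ commutes with $\mathcal Z(\mathcal A)$. That coupling between different rows is what produces the forced zero block in summand $k$.
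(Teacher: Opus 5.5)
Your proof is correct and follows the paper's argument. For necessity, you use Lemma \ref{equal dimension} together with the fact that $\mathrm{Ad}_U$ induces a single permutation of the minimal central projections $Q_j$ of $\mathcal B$, which couples the summands $P_i$ and $P_k$; for sufficiency, you use Propositions \ref{ppsn1} and \ref{ppsn2} and the direct-sum/tensor-product decomposition into the three basic inclusions. Your partition step, phrased as $\pi(Y_i)=Y_i$ for every $i$, is slightly cleaner than the paper's: it avoids the appeal to Proposition \ref{ppsn:block_structure} on the center and works without first knowing that row entries are constant.
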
 
\begin{proof}
Assume that the inclusion $\mathcal B\subset\mathcal A$ is regular.
By Lemma~\ref{equal dimension}, all nonzero entries in each row of the inclusion
matrix coincide, and for each $i\in\{0,1,\dots,s-1\}$ all summands
$M_{m_j}$ with $j\in Y_i$ have the same dimension.  
It  remains to show that the row supports of the inclusion matrix form
equivalence classes, i.e.\ that the matrix is a normalizer matrix.

Equivalently, we prove the following claim.

\medskip
\noindent\textbf{Claim.}
\emph{If $a_{ij}=0$ and $a_{kj}\neq0$, then $a_{il}\neq0$ implies $a_{kl}=0$.}

\medskip
Let $P=P_i+P_k$ be the corresponding central projection in $\mathcal A$.
By Proposition~\ref{inducedreg}, the induced inclusion
$P\mathcal B\subset P\mathcal A$ is regular, and moreover $P\mathcal N_{\mathcal A}(\mathcal B)
\subset \mathcal N_{P\mathcal A}(P\mathcal B)).$\\
Suppose $a_{kl}\neq0$.  
We claim that for every $U\in\mathcal N_{\mathcal A}(\mathcal B) \subset \mathcal N_{\mathcal A}(\mathcal Z(B))$,
\[
(P_kU)^{(l,j)} = U_k^{(l,j)} = 0.
\]
This block entry is well defined since $a_{kj}\neq0$ and $a_{kl}\neq0$ imply
$j,l\in Y_k$.

\medskip
Let $U\in\mathcal N_{\mathcal A}(\mathcal B)$ and decompose
$PU=U_i+U_k$ according to the central decomposition of $P\mathcal A$.
Then
\[
PUQ_jU^*
= U_iP_iQ_jU_i^* + U_kP_kQ_jU_k^*
= 0 \oplus U_kP_kQ_jU_k^*,
\]
since $a_{ij}=0$ implies $P_iQ_j=0$.

\medskip
Assume, in a contradiction, that $U_k^{(l,j)}\neq0$.
By Proposition~\ref{ppsn:block_structure}, this implies
\[
0 \oplus U_kP_kQ_jU_k^* = 0 \oplus P_kQ_l.
\]
Since the $Q_j$ are minimal central projections of $\mathcal B$,
there exists another minimal projection $Q_x$ such that $UQ_jU^* = Q_x.$
Hence,
\[
PUQ_jU^* = PQ_x = P_iQ_x \oplus P_kQ_x.
\]
Comparing with the previous expression, we obtain
\[
0 \oplus P_kQ_l = P_iQ_x \oplus P_kQ_x.
\]
Equating the second summand yields $x=l$, and therefore
\[
0 \oplus P_kQ_l = P_iQ_l \oplus P_kQ_l.
\]
However, $P_iQ_l\neq0$ since $a_{il}\neq0$, which is a contradiction.
Thus $U_k^{(l,j)}=0$.

\medskip
This shows that for all
$A\in\operatorname{span}(\mathcal N_{\mathcal A}(\mathcal B))$,
the block entry $A_k^{(l,j)}$ vanishes.
This contradicts the regularity of
$\mathcal B \subset\mathcal A$ unless $a_{kl}=0$.
The claim follows, and hence the inclusion matrix is a normalizer matrix.

\medskip

Assume now that the inclusion matrix is a normalizer matrix and that
all $M_{m_j}$ with $j\in Y_i$ have equal dimension.
By Propositions~\ref{ppsn1} and~\ref{ppsn2}, the inclusion
$\mathcal B\subset\mathcal A$ is isomorphic to
\[
\bigoplus_{j=0}^{r-1} M_{m_{\tau(j)}}
\;\subseteq\;
\bigoplus_{i=0}^{s-1} M_{n_{\sigma(i)}},
\]
with block-diagonal inclusion matrix
\[
A'=\operatorname{bl\text{-}diag}(A_{11},A_{22},\dots,A_{pp}),
\]
where each block $A_{kk}=[a_{k_i k^j}]_{1\le i\le s_k,\,1\le j\le r_k}$.

Since $m_j=m_k$ for all $j\in Y_k$, we may rewrite
\[
\bigoplus_{j=0}^{r-1} M_{m_{\tau(j)}}
= \bigoplus_{k=1}^p M_{m_k}\otimes\mathbb C^{r_k}.
\]
Similarly,
\[
\bigoplus_{i=0}^{s-1} M_{n_{\sigma(i)}}
= \bigoplus_{k=1}^p \bigoplus_{i=1}^{s_k} M_{n_{k_i}}.
\]
Thus the inclusion decomposes as a direct sum of inclusions
\[
M_{m_k}\otimes\mathbb C^{r_k}
\subset
\bigoplus_{i=1}^{s_k} M_{n_{k_i}},
\qquad 1\le k\le p,
\]
with inclusion matrix $A_{kk}$.

By dimension counting,
$n_{k_i}=m_k r_k a_{k_i k^1}$, and hence
\[
\bigoplus_{i=1}^{s_k} M_{n_{k_i}}
=
\bigoplus_{i=1}^{s_k}
M_{a_{k_i k^1}}\otimes M_{m_k}\otimes M_{r_k}.
\]
Each of the inclusions
\[
M_{m_k}\subset M_{m_k}, \qquad
\mathbb C \subset \bigoplus_{i=1}^{s_k} M_{a_{k_i k^1}}, \qquad
\mathbb C^{r_k}\subset M_{r_k}
\]
is regular, with inclusion matrices 
\begin{equation*}
    \begin{bmatrix}
    1
\end{bmatrix},
\begin{bmatrix}
    a_{k_1k^1}&a_{k_2k^1}&\cdots&a_{k_{s_k}k^1}
\end{bmatrix}^t \text{and} \begin{bmatrix}
    1&1&\cdots&1
\end{bmatrix}
\end{equation*} respectively and therefore their tensor product
\[
M_{m_k}\otimes\mathbb C^{r_k}
\subset
\bigoplus_{i=1}^{s_k} M_{n_{k_i}}
\]
is regular whose inclusion matrix $A_{kk}$ is tensor product of above three inclusion matrices.
Since finite direct sums of regular inclusions are regular, the inclusion
$\mathcal B\subset\mathcal A$ is regular.

This completes the proof.
\end{proof}

\color{black}
\begin{rmrk}
    Assume $\mathcal{B}$ is commutative. Then the regularity of the inclusion $\mathcal B \subset \mathcal A$ is equivalent to the  inclusion matrix being a normalizer matrix. Indeed, in this case each matrix summand of $\mathcal{B}$ has  one-dimensional, that is, $m_j=1$ for all $j\in \{0,1,\cdots,r-1\}$. 
    Moreover, the converse of Proposition \ref{ppsn: regular homogeneous} also holds. More precisely, if $\mathcal{B}$ is homogeneous and all entries of the inclusion matrix are equal to $t$, then $\mathcal{B}\subset M_{n}(\mathbb C)$ is isomorphic to tensor product of regular inclusions 
    \[
    M_m(\mathbb{C})\subset M_m(\mathbb{C}), \quad \mathbb C ^r \subset M_r(\mathbb{C}) \quad \text{and}\quad \mathbb C \subset M_{t}(\mathbb{C}).
    \]
    
    where $n=mrt$ and $\mathcal{B}=\bigoplus_{j=0}^{r-1}M_m(\mathbb{C})$.
    \end{rmrk}

\section{Unitary Orthonormal Bases and Depth Two Structure}

 In this section, we give two consequences of the theory of regularity established in the previous section. One addresses the existence of a unitary orthonormal basis contained in the normalizer, while the other concerns the depth of an inclusion.

\subsection{Unitary Orthonormal basis} 

We now turn to the problem of the existence of unitary orthonormal bases for finite-dimensional inclusions, with particular emphasis on bases contained in the normalizer. Building on the combinatorial description of regularity developed in the previous section, we show that the existence of such bases is completely determined by the inclusion matrix, associated dimension data of simple summands and an additional spectral condition \cite{BB}. While the construction in \cite{BB} produces unitary orthonormal bases for several inclusions, these bases need not, in general, be contained in the normalizer. In contrast, our characterization reduces the problem to explicit combinatorial and spectral criteria and yields concrete construcions of unitary orthonormal bases in the normalizer. As an application, we obtain an affirmative solution to a finite-dimensional analogue of the Bakshi-Gupta conjecture \cite{BG}.

\noindent We recall the following result from \cite{BB}
\begin{thm}[\cite{BB}, Theorem 3.3]
    Let $(\mathcal{B\subseteq A},E)$ be an inclusion of finite dimensional von Neumann algebras having $U$- property. Then $E$ is the unique conditional expectation preserving the tracial state $\varphi$ given by \begin{equation} \label{trace}
\varphi (\oplus X_i)= \frac{1}{\sum _{i=0}^{s-1}n_i^2} \sum
_{i=0}^{s-1}n_i~\mbox {trace}~ (X_i), ~~X_i \in M_{n_i}, 0\leq i\leq
(s-1).\end{equation} 
\end{thm}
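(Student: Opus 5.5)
The plan is to parametrize all faithful conditional expectations $E\colon\mathcal A\to\mathcal B$ by weights, and then translate the existence of a unitary orthonormal basis into equations on those weights. By the structure of \eqref{inclusion}, any such $E$ has the following form. For $x\in\mathcal A$, the $j$-th component of $E(x)$ is
\[
\sum_{i} \lambda_{ij}\,\mathrm{Tr}_{a_{ij}}\bigl(P_iQ_j x P_iQ_j\bigr),
\]
where $\mathrm{Tr}_{a_{ij}}$ is the partial trace over the multiplicity factor $\mathbf 1_{a_{ij}}$. The weights satisfy $\lambda_{ij}>0$ for $a_{ij}\neq 0$, and unitality $E(1)=1$ forces $\sum_i a_{ij}\lambda_{ij}=1$ for every $j$. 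Similarly, the expectation preserving a tracial state with weights $t_i$ on $M_{n_i}$ has $\lambda_{ij}=t_i/\sum_k a_{kj}t_k$. For the state \eqref{trace} we have $t_i\propto n_i$, so the target is
\[
\lambda_{ij}=\frac{n_i}{(A^tn')_j}=\frac{n_i}{d\,m_j},
\]
using Theorem~\ref{BB1}.

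The second step extracts a second family of equations from the basis $\{W_0,\dots,W_{d-1}\}$. I would compute the Watatani index element
\[
\mathrm{Ind}(E)=\sum_j W_jW_j^*=d\cdot 1,
\]
which is scalar because the $W_j$ are unitaries. Independently, I would compute $\mathrm{Ind}(E)$ in terms of the weights using a quasi-basis built from matrix units. I expect the formula $\mathrm{Ind}(E)P_i=\big(\sum_{j\in Y_i} a_{ij}/\lambda_{ij}\big)P_i$; the cases $\mathbb C\subset M_n$ and $M_m\subset M_m$ check out, but this is an unchecked guess in general, and I would have to verify it for the full weighted $E$. If it holds, the hypothesis yields
\[
\sum_{j\in Y_i}\frac{a_{ij}}{\lambda_{ij}}=d\quad\text{for all } i,\qquad \sum_i a_{ij}\lambda_{ij}=1\quad\text{for all } j.
\]
One checks directly that $\lambda_{ij}=n_i/(d m_j)$ solves this system, using $n'=Am'$ and $A^tn'=dm'$.

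The main obstacle is uniqueness of the positive solution. I would use Cauchy–Schwarz column by column:
\[
(A^tn')_j^2=\Bigl(\sum_i a_{ij}n_i\Bigr)^2\le\Bigl(\sum_i a_{ij}\lambda_{ij}\Bigr)\Bigl(\sum_i a_{ij}\frac{n_i^2}{\lambda_{ij}}\Bigr)=\sum_i a_{ij}\frac{n_i^2}{\lambda_{ij}}.
\]
Next, divide by $m_j$ and sum over $j$, replacing $(A^tn')_j$ by $dm_j$. The left side becomes $d^2\sum_j m_j^2\cdot\frac{1}{1}$ after the appropriate weighting. The right side, after exchanging the order of summation and applying the row equations, becomes $d\sum_i n_i^2\cdot(\text{matching weight})$. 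These two totals agree via $\sum_i n_i^2=n'^tAm'=d\,m'^tm'$. The exact weighting has to be chosen so that both sides collapse to the same quantity, and finding it is where I expect the real work to lie. Once it is found, equality forces equality in every Cauchy–Schwarz instance, so $\lambda_{ij}=c_jn_i$ for each $j$. The column constraint then gives $c_j=1/(A^tn')_j=1/(dm_j)$. Hence $E$ coincides with the $\varphi$-preserving expectation, and it is unique because a faithful trace determines its expectation.
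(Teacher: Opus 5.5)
The paper gives no proof of this statement; it is quoted from \cite{BB}. Your argument therefore has to stand on its own, and as written it has two genuine gaps.

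\textbf{First gap: the parametrization of $E$ is too narrow.} It is not true that every conditional expectation has scalar weights on the multiplicity factors. Since $E$ is a completely positive $\mathcal B$-bimodule map, the $j$-th component of $E(x)$ is $\sum_i(\mathrm{id}_{m_j}\otimes\omega_{ij})(P_iQ_jxP_iQ_j)$. Here $\omega_{ij}=\mathrm{Tr}(\rho_{ij}\,\cdot\,)$ for an \emph{arbitrary} positive $\rho_{ij}\in M_{a_{ij}}$ with $\sum_i\mathrm{Tr}(\rho_{ij})=1$. For example, $x\mapsto\mathrm{Tr}(\mathrm{diag}(p,1-p)\,x)$ with $p\neq\frac12$ is a conditional expectation $M_2(\mathbb C)\to\mathbb C$ that is not of your form. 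The theorem concerns an arbitrary $E$, so ruling out non-scalar $\rho_{ij}$ is part of what must be proved, and your argument silently assumes it.

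Your method does extend to this case:
\begin{enumerate}
\item The basis forces $E$ to be faithful. If $E(X^*X)=0$, then Kadison--Schwarz gives $E(W_k^*X)=0$ for all $k$, so $X=\sum_kW_kE(W_k^*X)=0$. Hence each $\rho_{ij}$ is invertible.
\item Choose orthonormal bases $\{f^j_\beta\}$ of $\mathbb C^{m_j}$, and eigenbases $\{e^{ij}_q\}$ of $\rho_{ij}$ with eigenvalues $\mu_{ij,q}$. The elements $\mu_{ij,q}^{-1/2}\,|f^l_\alpha\otimes e^{il}_p\rangle\langle f^j_1\otimes e^{ij}_q|\in M_{n_i}$, for $j,l\in Y_i$, form a quasi-basis. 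This gives $\mathrm{Ind}(E)P_i=\sum_{j\in Y_i}\mathrm{Tr}(\rho_{ij}^{-1})P_i$.
\item For $\rho_{ij}=\lambda_{ij}\mathbf 1$ this is exactly the formula you guessed, so that guess is right.
\item Set $\lambda_{ij}:=\mathrm{Tr}(\rho_{ij})/a_{ij}$. You keep $\sum_ia_{ij}\lambda_{ij}=1$. The inequality $\mathrm{Tr}(\rho_{ij}^{-1})\ge a_{ij}^2/\mathrm{Tr}(\rho_{ij})$, with equality iff $\rho_{ij}$ is scalar, turns the row equations into $\sum_{j\in Y_i}a_{ij}/\lambda_{ij}\le d$.
\end{enumerate}

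\textbf{Second gap: the uniqueness step is left undone.} You flag it as the real work, and the weighting you suggest is the wrong one. After dividing by $m_j$, the right-hand side becomes $\sum_in_i^2\sum_ja_{ij}/(m_j\lambda_{ij})$, to which the row relation no longer applies. No weighting is needed. Summing the column Cauchy--Schwarz inequalities over $j$ with unit weights gives
\[
d^2\sum_jm_j^2=\sum_j(A^tn')_j^2\le\sum_i n_i^2\sum_{j\in Y_i}\frac{a_{ij}}{\lambda_{ij}}\le d\sum_in_i^2=d^2\sum_jm_j^2 .
\]
The last equality uses $\sum_in_i^2=n'^tAm'=d\,m'^tm'$, which you already had. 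Hence every inequality is an equality:
\begin{itemize}
\item equality in Cauchy--Schwarz gives $\lambda_{ij}=c_jn_i$;
\item equality in the trace inequality gives $\rho_{ij}=\lambda_{ij}\mathbf 1$;
\item the column constraint then gives $c_j=1/(dm_j)$, i.e.\ $E$ is the $\varphi$-preserving expectation.
\end{itemize}
With these two repairs, your route (Watatani index equal to $d$, plus an extremal Cauchy--Schwarz argument) becomes a complete proof.
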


\begin{thm}\label{regular-basis}
Let $(\mathcal{B} \subset \mathcal{A}, E)$ be an inclusion of finite-dimensional von Neumann algebras, where $E$ denotes the unique state-preserving conditional expectation.  
Then the following statements are equivalent:
\begin{itemize}
  \item[(i)]  There exists a unitary orthonormal basis of $\mathcal{A}$ over $\mathcal{B}$ contained in the normaliser
  $\mathcal{N}_{\mathcal{A}}(\mathcal{B})$.
  \item[(ii)] The inclusion $\mathcal{B} \subset \mathcal{A}$ is regular and satisfies the spectral condition.
\end{itemize}

\iffalse
Let ($\mathcal{B}\subset \mathcal{A},E$) be an inclusion of finite-dimensional von Neumann algebra where $E$ is the unique conditional expectation preserving the tracial state $\varphi$ defined above. Then the inclusion is regular and satisfies the spectral condition if and only if it admits a unitary orthonormal basis contained in the normalizer. \fi
\end{thm}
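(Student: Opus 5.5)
The plan is to prove the two implications separately. The direction (ii)$\Rightarrow$(i) reduces, via the decomposition in Theorem~\ref{mainthm}, to the three building blocks, where the spectral condition is exactly what lets the pieces be glued. Throughout, I read the ``spectral condition'' as the condition of \cite{BB} that $A^tn'=dm'$ for some $d\in\mathbb N$, i.e.\ the necessary condition of Theorem~\ref{BB1}.

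\emph{(i)$\Rightarrow$(ii).} Let $\{W_j\}_{j=0}^{d-1}\subset\mathcal N_{\mathcal A}(\mathcal B)$ be a unitary orthonormal basis.
\begin{enumerate}
\item Theorem~\ref{BB1} gives $A^tn'=dm'$ directly, which is the spectral condition.
\item For regularity, every $X\in\mathcal A$ satisfies $X=\sum_j W_jE(W_j^*X)$ with $E(W_j^*X)\in\mathcal B$.
\item Every element of the finite-dimensional C$^*$-algebra $\mathcal B$ is a linear combination of unitaries $u\in\mathcal U(\mathcal B)$.
\item Each such $u$ normalizes $\mathcal B$, and $\mathcal N_{\mathcal A}(\mathcal B)$ is a group, so $W_ju\in\mathcal N_{\mathcal A}(\mathcal B)$.
\item Hence $\mathcal A=\operatorname{span}\mathcal N_{\mathcal A}(\mathcal B)$, and the inclusion is regular.
\end{enumerate}

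\emph{(ii)$\Rightarrow$(i).} By Theorem~\ref{mainthm} and Propositions~\ref{ppsn1}, \ref{ppsn2}, after permuting summands the inclusion is a direct sum over $k=1,\dots,p$ of the inclusions
\[
M_{m_k}\otimes\mathbb C^{r_k}\subset \bigoplus_{i=1}^{s_k} M_{a_{k_i}}\otimes M_{m_k}\otimes M_{r_k},
\]
where $a_{k_i}:=a_{k_ik^1}$. Each such summand is a tensor product of three inclusions, each of which has a unitary orthonormal basis inside its normalizer (Section~2):
\begin{itemize}
\item the trivial inclusion $M_{m_k}\subset M_{m_k}$, with $1$ element;
\item the scalar inclusion $\mathbb C\subset\bigoplus_i M_{a_{k_i}}$, with $\sum_i a_{k_i}^2$ elements;
\item the diagonal inclusion $\mathbb C^{r_k}\subset M_{r_k}$, with $r_k$ elements (the shift powers).
\end{itemize}
By the tensor-product remark in Section~2, the $k$-th summand therefore has a unitary orthonormal basis in its normalizer with
\[
d_k=r_k\sum_i a_{k_i}^2
\]
elements. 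By the direct-sum remark, such bases glue to a basis in $\mathcal N_{\mathcal A}(\mathcal B)$ provided all $d_k$ are equal.

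This is where the spectral condition enters. Evaluate the $j$-th coordinate of $A^tn'=dm'$ at a column $j$ in block $k$, using $n_{k_i}=a_{k_i}m_kr_k$:
\[
\sum_i a_{k_i}n_{k_i}=m_kr_k\sum_i a_{k_i}^2=d\,m_k .
\]
Hence $d_k=d$ for every $k$, so the bases can be glued.

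The step I expect to be the main obstacle is checking that the conditional expectations match. The glued family must be orthonormal for the given $E$, namely the unique $\varphi$-preserving one from \eqref{trace}, and not merely for $\bigoplus_k(E_k^{(1)}\otimes E_k^{(2)}\otimes E_k^{(3)})$, where each $E_k^{(\cdot)}$ preserves its own block state. The isomorphisms of Proposition~\ref{ppsn2} only permute summands, so they transport the weighted trace to the analogous trace. The remaining task is to show the two conditional expectations coincide.
\begin{enumerate}
\item Restricted to the $k$-th block, $\varphi$ is proportional to the block's own weighted trace with weights $n_{k_i}$.
\item The blocks are built from central projections of both $\mathcal A$ and $\mathcal B$, so a $\varphi$-preserving conditional expectation splits as a direct sum over blocks.
\item On a tensor product, the state with weights $n_{k_i}=a_{k_i}m_kr_k$ factors as the product of the three standard states: the trace on $M_{m_k}$, the state \eqref{state} on $\bigoplus_i M_{a_{k_i}}$, and the normalized trace on $M_{r_k}$. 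Hence the tensor product of the three expectations preserves it.
\item Uniqueness of the state-preserving conditional expectation then identifies this tensor product with the restriction of $E$.
\end{enumerate}
Alternatively, orthonormality with respect to $E$ can be checked directly from the explicit unitaries.
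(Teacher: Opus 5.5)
Your proposal is correct and follows essentially the same route as the paper. For (i)$\Rightarrow$(ii) you use Theorem~\ref{BB1} together with the basis expansion, which you make more explicit than the paper does. For (ii)$\Rightarrow$(i) you use the block/tensor decomposition from Theorem~\ref{mainthm}, with the spectral condition forcing $d_k=r_k\sum_i a_{k_i}^2$ to equal $d$ for every block so the bases can be glued. Your check that $E$ splits blockwise and factors as the tensor product of the three standard expectations is a slightly more careful version of the paper's identification $\tilde\varphi_k=\tilde\varphi_k^0\otimes\tilde\varphi_k^1\otimes\tilde\varphi_k^2$: the paper writes $\tilde\varphi=\sum_k\tilde\varphi_k$, which only holds up to positive block weights, and your step~1 handles that correctly.
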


\begin{prf} 
Assume first that the inclusion $\mathcal{B}\subset \mathcal{A}$ is regular.
By Theorem \ref{mainthm}, the inclusion matrix is an normalizer matrix and by Proposition \ref{ppsn2}, we have an isomorphism  
\[
 (\mathcal{B}\subset \mathcal{A})\cong(\tilde{\mathcal{B}}\subset\tilde{\mathcal{A}})
\]  
with inclusion matrix $A'=\text{diag}(A_{11},A_{22},\cdots,A_{pp})$  and the dimension vector of $\tilde{\mathcal{B}}$ is $$\begin{bmatrix}
      m_1&\ldots&m_1&\ldots&m_p&\ldots&m_p
      \end{bmatrix}^t$$ where each $m_k$ appears consecutively for $r_k$ times.\\
    The subalgerba system $(\tilde{\mathcal{B}}\subset \tilde{\mathcal{A}},\tilde{E})$ where $\tilde{E}$ is the unique conditional expectation preserving the state \begin{equation*}
\varphi\circ\theta^{-1}=\tilde\varphi (\oplus X_{\sigma(i)})= \frac{1}{\sum _{i=0}^{s-1}n_i^2} \sum
_{i=0}^{s-1}n_i~\mbox {trace}~ (X_i), ~~X_i \in M_{n_i}, 1\leq i\leq
(s-1).\end{equation*} on $\tilde{\mathcal{A}}$, where $\theta$ is the isomorphism defined in Proposition \ref{ppsn2}. \\
The inclusion matrix $A'$ satisfies the spectral condition as 
\begin{equation}\label{spcd}
    \sum_{i=1}^{s_k}a_{k_ik^1}n_{k_i}=dm_k \;\;\;   \text{for} \;\; 1\leq k \leq p.
\end{equation} 
 By dimension counting, we have $
 n_{k_i}=m_kr_ka_{k_ik^1}   \qquad   1\leq i \leq s_k$, so the above spectral condition becomes 
 \begin{equation*}
     d=r_k\sum_{i=1}^{s_k}a_{k_ik^1}^2\;\;\ \text{for}\;1\leq k\leq p.
 \end{equation*}   
The inclusion decomposes as a direct sum of systems $(M_{m_k}\otimes \mathbb{C}^{r_k} \subset \oplus_{i=1}^{s_k} M_{n_{k_i}},\tilde{E_k})$ with inclusion matrix $A_{kk}$ where $\tilde{E}_k$ is the unique conditional expectation satisfing the state \begin{equation*}\label{the state}
\tilde\varphi_k (\oplus X_i)= \frac{1}{\sum _{i=1}^{s_k}n_{k_i}^2} \sum
_{i=1}^{s_k}n_{k_i}~\mbox {trace}~ (X_i), ~~X_i \in M_{n_{k_i}}, 1\leq i\leq
s_k.\end{equation*}  for $1\leq k\leq p$ and satisfies the spectral condition (\cref{spcd}).(Note that $\tilde\varphi=\sum_{k=1}^p\tilde\varphi_i$)

 Each such inclusion further decomposes as a tensor product of
 \[
 (M_{m_k}\subset M_{m_k},Id),\qquad (\mathbb{C}\subset \oplus_{i=1}^{s_k}M_{a_{k_ik^1}},E_1) \qquad (\mathbb{C}^{r_k}\subset M_{r_k},E_2)
 \]  
with inclusion matrices 
\begin{equation*}
    \begin{bmatrix}
    1
\end{bmatrix},
\begin{bmatrix}
    a_{k_1k^1}&a_{k_2k^1}&\cdots&a_{k_{s_k}k^1}
\end{bmatrix}^t \text{and} \begin{bmatrix}
    1&1&\cdots&1
\end{bmatrix}
\end{equation*}

respecitively. The corresponding states are 
\[
\tilde\varphi_k^0(X)=\frac{\text{trace}~(Y)}{m_k},~~Y\in M_{m_k}~~ ; \qquad \tilde\varphi_k^2=\frac{\text{trace}~(Z)}{r_k},~~Z\in M_{r_k}~~;
\]
\[
\tilde\varphi_k^1 (\oplus X_i)= \frac{1}{\sum _{i=1}^{s_k}a_{k_ik^1}^2} \sum
_{i=1}^{s_k}a_{k_ik^1}~\mbox {trace}~ (X_i), ~~X_i \in M_{a_{k_ik^1}},
\] so that $\tilde\varphi_k=\tilde\varphi_k^0\otimes\tilde\varphi_k^1\otimes\tilde\varphi_k^2$.

 Each of these three inclusions admits unitary orthonormal basis in the normalizer (as mentioned in the preliminary section). Hence, their tensor product $$(M_{m_k}\otimes \mathbb{C}^{r_k} \subset \oplus_{i=1}^{s_k} M_{n_{k_i}},\tilde{E_k})$$
 admits unitary o.n.b. in the normalizer. 
Finally, as a direct sum of these inclusions $\tilde{\mathcal{B}}\subset \tilde{\mathcal{A}}$ admits a unitary orthonormal basis in the normalizer.\\
Converse of the above statement follows from the definition of regularity and Theorem \cref{BB1}.\qed \end{prf}

The following corollary is an analogue of Bakshi-Gupta conjecture \cite{BG} for finite dimensional von Neumann algebras.

\begin{crlre}\label{crlreglasgow}
    For the inclusion ($\mathcal{B}\subset \mathcal{A},E$) if either $\mathcal{B}$ or $\mathcal{A}$ is a matrix algebra, then the following are equivalent,
    \begin{enumerate}\itemsep0em \itemsep0em
        \item The inclusion is regular,
        \item It admits a unitary orthonormal basis contained in the normalizer.
    \end{enumerate}
\end{crlre}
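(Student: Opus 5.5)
By Theorem \ref{regular-basis}, it suffices to show that when $\mathcal{B}$ or $\mathcal{A}$ is a full matrix algebra, regularity of $\mathcal{B}\subset\mathcal{A}$ automatically forces the spectral condition $A^tn'=dm'$ for some $d$. The implication (2)$\Rightarrow$(1) is immediate: a unitary orthonormal basis spans $\mathcal{A}$ as a right $\mathcal{B}$-module, and each product $W_jb$ with $b\in\mathcal{B}$ lies in the span of $W_j\mathcal{U}(\mathcal{B})\subseteq\mathcal{N}_{\mathcal{A}}(\mathcal{B})$. So the work is in (1)$\Rightarrow$(2), which splits into two cases according to which algebra is a matrix algebra.

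\emph{Case $\mathcal{A}=M_n(\mathbb{C})$.} By Proposition \ref{ppsn: regular homogeneous}, $\mathcal{B}\cong\bigoplus_{j=0}^{r-1}M_m(\mathbb{C})$ is homogeneous and the inclusion matrix is the row $[t\;t\;\cdots\;t]$, with $n=rmt$. Then every entry of $A^tn'$ equals $tn=rmt^2$, while $m'=[m,\ldots,m]^t$. Hence $A^tn'=dm'$ with $d=rt^2$, and the spectral condition holds.

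\emph{Case $\mathcal{B}=M_m(\mathbb{C})$.} Here $r=1$, so the inclusion matrix is a single column $[a_0,\ldots,a_{s-1}]^t$ with $n_i=a_im$. It is automatically a normalizer matrix, since all row supports equal $\{0\}$. Then $A^tn'=\sum_i a_in_i=m\sum_i a_i^2$, which is a multiple of $m'=[m]$ with $d=\sum_i a_i^2$. So the spectral condition holds trivially, and regularity holds as well by Theorem \ref{mainthm}.

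In both cases Theorem \ref{regular-basis} then yields a unitary orthonormal basis inside $\mathcal{N}_{\mathcal{A}}(\mathcal{B})$, with $E$ the trace-preserving expectation of equation \ref{trace}. The main point to check is that the ``spectral condition'' from \cite{BB} is exactly the existence of $d$ with $A^tn'=dm'$, the form used in the proof of Theorem \ref{regular-basis} via equation \ref{spcd}. Granting that, the argument reduces to the two dimension computations above. The only remaining care is to note that the normalization of the state does not affect the existence of such a $d$.
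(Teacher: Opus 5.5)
Your proof is correct and follows the paper's argument. For (2)$\Rightarrow$(1) you use the basis expansion, justified more carefully than the paper's ``by definition'' through $W_j\,\mathcal{U}(\mathcal{B})\subseteq\mathcal{N}_{\mathcal{A}}(\mathcal{B})$. For (1)$\Rightarrow$(2) you split into the cases $\mathcal{A}=M_n$ (homogeneity with constant inclusion matrix) and $\mathcal{B}=M_m$, check the spectral condition directly with the explicit values $d=rt^2$ and $d=\sum_i a_i^2$ that the paper calls ``clear'', and then invoke Theorem~\ref{regular-basis}.
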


\begin{prf}
If the inclusion admits a unitary orthonormal basis in the normalizer, then by the definition of basis, the inclusion is regular.\\
 Conversely, assume that the inclusion is regular. We consider two cases.\\
\textbf{Case 1:} $\mathcal{B}$ is a matrix algebra.\\
Then the inclusion is $M_m\subset \oplus_{i=0}^{s-1}M_{n_i}$ whose inclusion matrix is $ A=\begin{bmatrix}
        a_0&a_1& \ldots& a_{s-1}
    \end{bmatrix}^t$
Clearly, the above inclusion matrix satisfies the spectral condition and it is a \emph{normalizer matrix}. So, by the above theorem, it admits a unitary orthonormal basis in the normalizer.\\
\textbf{Case 2:} $\mathcal{A}$ is a matrix algebra.\\
Then the inclusion is $\oplus_{j=0}^{s-1}M_{m_j}\subset M_n$. Regularity implies the dimension vector of $\mathcal{B}$ as $m'=\begin{bmatrix}
    m&m& \ldots &m 
\end{bmatrix}^t$ and the inclusion matrix is $A=\begin{bmatrix}
    c&c&\cdots&c
\end{bmatrix}$ for some $c\in \mathbb{N}$. Clearly, it satisfies the spectral condition and so by the previous theorem it admits an unitary orthonormal basis in the normalizer. \qed
\end{prf}

\begin{crlre}\label{crlreglasgow1}
     For the inclusion $\mathbb C^r \subset M_n(\mathbb C)$ , the inclusion is regular if and only if the corresponding inclusion matrix satisfies spectral condition.
\end{crlre}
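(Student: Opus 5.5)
The plan is to reduce both directions to the single fact that, for the inclusion $\mathbb C^r\subset M_n(\mathbb C)$, the spectral condition is equivalent to all entries of the inclusion matrix being equal. I take the spectral condition in the sense of \cite{BB} recalled in Theorem \ref{BB1}: there is a $d\in\mathbb N$ with $A^tn'=dm'$. Here the data are
\[
A=\begin{bmatrix} a_0 & a_1 & \cdots & a_{r-1}\end{bmatrix},\qquad m'=\begin{bmatrix}1&\cdots&1\end{bmatrix}^t,\qquad n'=[n],\qquad n=\sum_j a_j .
\]
Then $A^tn'=\begin{bmatrix} a_0n & \cdots & a_{r-1}n\end{bmatrix}^t$. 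So $A^tn'=dm'$ holds for some $d$ exactly when $a_0n=a_1n=\cdots=a_{r-1}n$. Since $n\ge 1$, this is the same as $a_0=\cdots=a_{r-1}$, and then $d=na_0=ra_0^2$. This elementary computation is the first step.

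For the forward direction, assume the inclusion is regular. Proposition \ref{lmma1} gives directly that all entries of the inclusion matrix are equal. The computation above then shows that the spectral condition holds with $d=ra_0^2$.

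For the converse, assume the spectral condition. The first step gives $a_0=\cdots=a_{r-1}=:a$, and $a>0$ because every summand of $\mathbb C^r$ must occur in $M_n$, so no column is zero. Hence $A$ is irredundant and has a single row whose nonzero entries are all equal. The row-support partition condition is vacuous with only one row, so $A$ is a normalizer matrix in the sense of Definition \ref{normalizer matrix}. Condition (2) of Theorem \ref{mainthm} holds trivially, since every summand of $\mathcal B=\mathbb C^r$ is $M_1$. Theorem \ref{mainthm} therefore yields regularity. Concretely, the inclusion is the tensor product of $\mathbb C^r\subset M_r$ and $\mathbb C\subset M_a$, as in the Remark after Theorem \ref{mainthm}.

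Alternatively, for this direction Theorem \ref{regular-basis} together with Corollary \ref{crlreglasgow} already gives a unitary orthonormal basis in the normalizer, and hence regularity.

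The only real subtlety is making the meaning of ``spectral condition'' precise, as the existence of some $d$ with $A^tn'=dm'$, and noting that for a single-row matrix it collapses to equality of the entries. After that, everything is an application of Proposition \ref{lmma1} and Theorem \ref{mainthm}.
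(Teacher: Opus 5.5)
Your proof is correct and follows the paper's argument: you compute $A^tn'=[a_0n,\dots,a_{r-1}n]^t$ to reduce the spectral condition to equality of the entries, then use Proposition~\ref{lmma1} for one direction and Theorem~\ref{mainthm} for the other. The only flaw is your ``alternative'' route via Theorem~\ref{regular-basis} and Corollary~\ref{crlreglasgow}, which is circular: in the direction that produces a normalizing basis, both results already assume regularity, so drop that remark.
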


\begin{prf}
    Let $A=\begin{bmatrix}
        a_0&a_1&\cdots&a_{r-1}
    \end{bmatrix}$ denote the inclusion matrix. Suppose first that $A$ satisfies the spectral condition, then $A^tn'=dm'$, where $n'=\begin{bmatrix}
        n
    \end{bmatrix}$ and $m'=\begin{bmatrix}
        1&1&\cdots&1
    \end{bmatrix}^t$. This implies $a_jn=d \quad \forall j$. Consequently, all entries of $A$ are equal. Hence $A$ is a normalizer matrix and by Theorem \ref{mainthm}, the inclusion is regular. Conversely, if the inclusion is regular, then  $a_j=t  \quad \forall j$, for some $t\in \mathbb{N}$. Therefore, $A$ satisfies the spectral condition. This completes the proof. \qed
\end{prf}

  \begin{rmrk}
Combining the above two corollaries, suppose that the smaller algebra
$\mathcal B$ is commutative and that either $\mathcal A$ or $\mathcal B$ is a
matrix algebra. Then the following conditions are equivalent:
\begin{itemize}
  \item [(a)] the spectral condition on the inclusion matrix;
  \item [(b)] regularity of the inclusion $\mathcal B \subset \mathcal A$;
  \item [(c)] the existence of a unitary orthonormal basis contained in the
  normalizer of $\mathcal B$ in $\mathcal A$.
\end{itemize}
\end{rmrk}

\subsection{Depth of an inclusion}
For a finite group $G$ and a normal subgroup $H$, it is well known that the inclusion of the complex group algebra $\mathbb C (H)\subset \mathbb C (G)$ is regular (See example 1.2.3 \cite{W}). In this classical setting, the depth of the inclusion was computed in \cite{BKK} (Theorem 6.9) and shown to be equal to $2$. This example motivates a broader investigation of the depth of regular inclusion in finite-dimensional von Neumann algebras. In this section, we use the {normalizer matrix } associated with a regular inclusion to extend this result to general finite dimensional case.

 We now recall the notion of \emph{depth} as in \cite{BKK}

\begin{dfn}
    An $s\times r$ matrix $A$ is of depth $n\geq 2$ if $n$ is the least integer for which the following inequality (called a depth $n$ matrix inequality) holds for some $q\in \mathbb{Z}_+$,\begin{equation}
        A^{n+1}\leq qA^{n-1}.
    \end{equation} where the power of $s\times r$ matrix $A$ is understood as $A^2=AA^t, A^3=AA^tA$, and so forth.
    The definition depends only on the equivalence class of $A$ up to permutation.
\end{dfn}

In parts of the literature, the {\em inclusion matrix } is defined as the transpose of the inclusion matrix used in this article. In general, the depths associated with a matrix $A$ and its transpose $A^{t}$ need not coincide. However, when the depth is even, $A$ and $A^t$ have the same depth, see \cite{BKK} Theorem 3.16.

\begin{thm}
    If the inclusion $\mathcal{B}\subset \mathcal{A}$ is regular, then the corresponding inclusion matrix $A$ is of depth 2 and the constant $q$ appearing in the definition is bounded below by square of norm of the inclusion matrix.
\end{thm}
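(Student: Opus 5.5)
By Theorem \ref{mainthm}, the inclusion matrix $A$ of a regular inclusion is a normalizer matrix. By Proposition \ref{ppsn1} there are permutation matrices $P,Q$ with $PAQ=\mathrm{bl\text{-}diag}(A_{11},\dots,A_{pp})$, where every row of each $A_{kk}$ has constant nonzero entries. The plan is to reduce to this block-diagonal form and then compute $AA^tA$ explicitly, block by block.

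\textbf{Reduction.} First I check that both the depth-$2$ inequality and the norm are invariant under pseudo-equivalence. We have
\[
(PAQ)(PAQ)^t(PAQ)=P\,AA^tA\,Q,
\]
because $QQ^t=\mathbf 1$. Entrywise inequalities are preserved when the same permutations act on both sides, and $\|PAQ\|=\|A\|$. So I may assume $A=\mathrm{bl\text{-}diag}(A_{11},\dots,A_{pp})$. Then
\[
AA^tA=\mathrm{bl\text{-}diag}\bigl(A_{11}A_{11}^tA_{11},\dots,A_{pp}A_{pp}^tA_{pp}\bigr),
\]
and everything reduces to a single block.

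\textbf{Key computation.} Write $A_{kk}=\alpha_k\mathbf 1_{r_k}^t$, where $\alpha_k=(a_{k_1k^1},\dots,a_{k_{s_k}k^1})^t$ has positive integer entries. Then
\[
A_{kk}A_{kk}^tA_{kk}=\alpha_k(\mathbf 1_{r_k}^t\mathbf 1_{r_k})\alpha_k^t\alpha_k\mathbf 1_{r_k}^t=\lambda_k A_{kk},\qquad \lambda_k:=r_k\sum_{i=1}^{s_k}a_{k_ik^1}^2\in\mathbb Z_+ .
\]
Hence $A^3=\mathrm{bl\text{-}diag}(\lambda_1A_{11},\dots,\lambda_pA_{pp})$. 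Taking $q=\max_k\lambda_k$ gives the depth-$2$ inequality $A^3\le qA$. Since depth is by definition at least $2$, $A$ has depth $2$.

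\textbf{The bound on $q$.} Each $A_{kk}$ has rank one, so $\|A_{kk}\|=\|\alpha_k\|\,\|\mathbf 1_{r_k}\|=\sqrt{\lambda_k}$. The norm of a block-diagonal matrix is the maximum of the block norms, so $\|A\|^2=\max_k\lambda_k$. Now let $q$ be any integer with $A^3\le qA$, and compare at an entry where $A_{kk}$ is nonzero. That entry is positive, and the inequality reads $\lambda_k a\le qa$, so $q\ge\lambda_k$ for every $k$. Therefore $q\ge\|A\|^2$, and the minimal admissible $q$ is exactly $\|A\|^2$.

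The only point requiring care is the reduction step, namely that the matrix powers $A^n$ (alternating $A$ and $A^t$) intertwine correctly with pseudo-equivalence. This is the one-line identity $QQ^t=\mathbf 1$ used above. The rest is the rank-one computation.
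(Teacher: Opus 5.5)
Your proposal is correct and follows essentially the same route as the paper: pass to the block-diagonal form $\mathrm{bl\text{-}diag}(A_{11},\dots,A_{pp})$ of the normalizer matrix, use the rank-one identity $A_{kk}A_{kk}^tA_{kk}=\lambda_kA_{kk}$ with $\lambda_k=r_k\sum_{i=1}^{s_k} a_{k_ik^1}^2$, and take $q=\max_k\lambda_k=\|A\|^2$. You go slightly further than the paper: you prove that every admissible $q$ satisfies $q\ge\|A\|^2$, whereas the paper only exhibits this choice of $q$; and you check invariance under pseudo-equivalence explicitly, where that step also uses $P^tP=\mathbf 1$, not only $QQ^t=\mathbf 1$.
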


\begin{prf}
    By Theorem \ref{mainthm},  $A$ is pseudo-equivalent to \begin{equation*}
         A':= \text{diag}(A_{11},A_{22},\cdots,A_{pp})
    \end{equation*}
    So, $A'^3=\text{diag}(A_{11}^3,A_{22}^3,\cdots,A_{pp}^3)$
\end{prf}, where $A_{kk}^3=A_{kk}A_{kk}^tA_{kk}=(r_k\sum_{i=1}^{s_k} a_{k_ik^1}^2)A_{kk}$.\\
Choosing \begin{equation*}
    q=\text{max}\{r_k\sum_{i=1}^{s_k} a_{k_ik^1}^2:\;1\leq k\leq p\}=\lVert A \rVert^2,
\end{equation*} we have $A^3\leq qA$.  \qed

\iffalse
\begin{rmrk}
    Additionally, if the inclusion is connected as in \cite{GHJ} then $A^3=[\mathcal{A}:\mathcal{B}]A$.
\end{rmrk}
\fi
On more details on depth of an inclusion, we refer reader to \cite{BKK} and the bibliography therein.
%\begin{crlre}
%    For any $n\geq 2$ the standard inclusion $\mathbb{C}S_n\subset\mathbb{C} S_{n-1}$ is not regular.
%\end{crlre}

%    \subsection{Quantum chromatic number}
%    Let $\mathcal{B}\subseteq \mathcal{A}$ be a strongly Markov inclusion of finite-dimensional von Neumann algebras which admits an orthonormal Pimsner-Popa basis $\{u_i\}$ for $\mathcal{A}$ over $\mathcal{B}$ in $\mathcal{N_A(B)}$. Then \begin{equation}
 %   \mathcal{X}_{loc}(\mathcal{A},\mathcal{B}',B(L^2(\mathcal{M},\tau)))=\mathcal{X}_{q}(\mathcal{A},\mathcal{B}',B(L^2(\mathcal{M},\tau)))=\mathcal{X}_{qc}(\mathcal{A},\mathcal{B}',B(L^2(\mathcal{M},\tau)))=[\mathcal{A}:\mathcal{B}]= \lvert\lvert A\rvert\rvert^2.
%    \end{equation} 

\subsection*{Acknowledgement} 
The authors are grateful to Prof. B. V. Rajarama Bhat for helpful discussion. The first author acknowledges the support of the INSPIRE Faculty Grant DST/INSPIRE/04/2019/002754
and ANRF/ECRG/2024/002328/PMS.

\noindent {\em Department of Mathematics and Statistics},\\
{\em Indian Institute of Technology Kanpur},\\
{\em Uttar Pradesh $208016$, India}
\medskip
  
\noindent {Keshab Chandra Bakshi:} keshab@iitk.ac.in, bakshi209@gmail.com\\
{Silambarasan C:} silamc23@iitk.ac.in

\end{document}